\definecolor{cyan(process)}{rgb}{0.0, 0.6, 1.0}
\definecolor{blue-violet}{rgb}{0.54, 0.17, 0.89}
\newcommand{\ku}{ \Bbbk}
\newcommand{\cA}{\mathcal{A}}
\newcommand{\cB}{\mathcal{B}}
\newcommand{\cC}{\mathcal{C}}
\newcommand{\cD}{\mathcal{D}}
\newcommand{\cM}{\mathcal{M}}
\newcommand{\cN}{\mathcal{N}}
\newcommand{\bF}{\mathbf{F}}
\newcommand{\bT}{\mathbf{T}}
\newcommand{\degf}[1]{\left|#1\right|}
\newcommand{\vect}{\operatorname{vec}}
\newcommand{\Irr}{\operatorname{Irr}}
\newcommand{\ot}{\otimes}
\newcommand{\fpdim}{\operatorname{FPdim}}
\newcommand{\uno}{\mathbf{1}}
\newcommand{\id}{\operatorname{id}}
\newcommand{\modaction}{\bar{\ot}}
\newcommand{\trid}{\triangleright}
\newcommand{\fiz}{\triangleleft}
\newcommand{\tridb}{\blacktriangleright}
\newcommand{\fizb}{\blacktriangleleft}
\newcommand{\stkout}[1]{\ifmmode\text{\sout{\ensuremath{#1}}}\else\sout{#1}\fi}
\title[Bicrossed products and crossed extensions]{Relation between bicrossed products and crossed extensions of fusion categories}
\newtheorem{theorem}{Theorem}[section]
\newtheorem{lemma}[theorem]{Lemma}
\newtheorem{prop}[theorem]{Proposition}
\newtheorem*{theorem-non}{Theorem}
\theoremstyle{definition}
\newtheorem{definition}[theorem]{Definition}
\newenvironment{proofw}{\par
  \pushQED{\qed}%
  \normalfont \topsep6\p@\@plus6\p@\relax
  \trivlist
  \item[]\ignorespaces
}{%
  \popQED\endtrivlist\@endpefalse
}
\theoremstyle{remark}
\newtheorem{remark}[theorem]{Remark}
\author[M. M\"uller]{Monique M\"uller}
\address{Departamento de Matem\'atica e Estat\'istica, Universidade Federal de S\~ao
Jo\~ao del-Rei, Brazil \& Department of Mathematics, Indiana University, USA}
\email{monique@ufsj.edu.br}
\author[H. M. Pe\~na Pollastri]{H\'ector Mart\'in Pe\~na Pollastri}
\address{Department of Mathematics, Indiana University, USA}
\email{hpenapol@iu.edu}
\author[J. Plavnik]{Julia Plavnik}
\address{Department of Mathematics, Indiana University, USA \& Department of Mathematics and Data Science, Vrije Universiteit Brussel, Belgium}
\email{jplavnik@iu.edu}
\begin{document}

\begin{abstract}
We show that all crossed extensions defined by Natale can be recovered as duals of bicrossed products of fusion categories. As an application, we prove that any exact factorization between a pointed fusion category $\vect_G$ and a fusion category $\cC$ can be realized as a bicrossed product $\vect_G\bowtie \cC$. 
\end{abstract}

\maketitle

\setcounter{tocdepth}{1}

\section{Introduction}

The study of fusion categories has become of crucial importance in recent years, with the new advances in quantum computing via the Kitaev model \cite{AK} as a way to produce error-resistant quantum computers, for the description of topological states of matter, and in general as a way of formalizing 2-dimensional conformal quantum field theory as described in \cite{ms}. Then it is of great importance to find new examples and to understand how the known examples can be combined into other ones. Our emphasis is on concrete examples, where the associativity and all the maps involving the categorical structure can be displayed as a collection of matrices, the $6j$-symbols, satisfying coherence conditions. Pursuing this goal, in \cite{mp-nuestro}, the authors defined the bicrossed product of fusion categories as a way to construct and understand exact factorizations. This construction produces all possible Grothendieck rings for an exact factorization as indicated in \cite[Theorem 3.14]{mp-nuestro}, and a follow-up question is whether every exact factorization is equivalent to a bicrossed product in the categorical setting as it happens for groups \cite{Takeuchi-matched},  \cite[\S IX.1]{kassel-book} and Hopf algebras \cite{Majid-paper},\cite[\S 7.2]{Majid-book}, \cite[\S IX.2]{kassel-book}; see \cite[Question 3]{mp-nuestro}. The following theorem provides an answer when one of the factors is the category of finite-dimensional $G$-graded vector spaces for a finite group $G$, denoted as $\vect_G$.
\begin{theorem-non}[Theorem \ref{teo:teorema-2}]
    If $\cD = \vect_G\bullet \cC$ is an exact factorization of fusion categories, then $\cD$ is tensor equivalent to a bicrossed product $\vect_G\bowtie \cC$.  
\end{theorem-non}
To obtain this result, we first prove in Theorem \ref{teo:correspondencia-exact-con-bicrossed-product-con-vectG} that the extensions
\begin{align*}
    \operatorname{Rep(G)\xhookrightarrow{}} (\vect_G\bullet \cC)^*_{\cC} \xrightarrow{} \cC, &&  \operatorname{Rep(G)\xhookrightarrow{}} (\vect_G\bowtie\cC)^*_{\cC} \xrightarrow{} \cC,
\end{align*}
 associated with the exact factorizations $\vect_G\bullet \cC$ and $\vect_G\bowtie \cC$ are equivalent in the set of extensions $\operatorname{Ext}(\operatorname{Rep(G),\cC,\vect)}$ with respect to $\vect$, see Definition \ref{def:Ext}. Now the rest of the proof follows by combining two previous results concerning exact factorizations \cite{G-exact-factorization} and extensions (in the sense of exact sequences as defined in \cite{EG-exact-sequence}). 
In \cite[Corollary 5.6]{generalization-G-nonsemisimple}, the authors show that there exists a one-to-one correspondence between certain exact factorizations and extensions of fusion categories; on the other hand, a big family of extensions of fusion categories, the so-called abelian extensions, were classified in \cite[Theorem 1.1]{Natale-exact-sequence-G} as crossed extensions of fusion categories. The main result then follows from our Theorem \ref{teo:correspondencia-exact-con-bicrossed-product-con-vectG} about extensions mentioned previously.

This article is organized as follows. In Section \ref{section:Preliminaries}, we introduce the preliminaries and notation needed to state and prove the results, including an overview of bicrossed products, exact factorizations, extensions (exact sequences), and crossed extensions. In Section \ref{section:main-result}, the main results (Theorem \ref{teo:correspondencia-exact-con-bicrossed-product-con-vectG} and \ref{teo:teorema-2}) are proven.

\section*{Acknowledgements} MM was partially supported by the Association for Women in Mathematics (AWM) through the AWM Mathematical Endeavors Revitalization Program (MERP) grant. The research of JP was partially supported by the NSF grant DMS-2146392. HMPP, MM, and JP were partially supported by Simons Foundation Award 889000 as part of the Simons Collaboration on Global Categorical Symmetries. HMPP and JP would like to thank the hospitality and excellent working conditions at the Department of Mathematics at the University of Hamburg, where JP has carried out part of this research as an Experienced Fellow of the Alexander von Humboldt Foundation and HMPP as a visitor. We thank the referee for the detailed and helpful comments to improve this manuscript.

\section{Preliminaries and notation}\label{section:Preliminaries}

Let $\ku$ be an algebraically closed field of characteristic zero. The  algebraically closed assumption is required in references \cite{G-exact-factorization} and \cite{Natale-exact-sequence-G} from which we used results. For example, in \cite{Sanford}, it is explained why being algebraically closed is important. The characteristic zero hypothesis is assumed in reference \cite{Natale-exact-sequence-G}.  All vector spaces are considered to be finite-dimensional. All categories considered in this work are abelian $\ku$-linear and semisimple; we refer the reader to \cite{maclane} for further details. A \emph{fusion} category $\cC$ is a finite semisimple rigid tensor category with product $\ot$, associativity $\alpha$ and unital maps $\ell$, $r$, and the unit object $\uno$ is simple; see \cite[Definition 4.1.1]{EGNO-book}. The set of equivalence classes of simple objects is denoted by $\Irr(\cC)$.

Given a category $\cM$ and a fusion category $\cC$, we say that $\cM$ is a (right) \emph{module category over $\cC$} if there exist a bifunctor $\modaction\colon \cM \times \cC\to \cM$ and natural isomorphisms $m_{M, C, C'}\colon M\modaction(C\ot C')\to (M\modaction C)\modaction C'$, $m^0_M\colon M\modaction \uno \to M$, for all $M\in \cM$ and $C, C'\in\cC$, satisfying some conditions, see \cite[Definition 7.1.1]{EGNO-book}.

Functors are required to preserve the relevant structure. Specifically, a functor $\mathbf{F}\colon \cC \to \cD$ between categories is always $\ku$-linear and exact. If $\cC$ and $\cD$ are fusion categories, the functor $\mathbf{F}=(\mathbf{F}, \mathbf{F}^2_{X,Y}, \mathbf{F}^0)$ is also required to be tensor, where $\mathbf{F}^2_{X,Y}\colon \mathbf{F}(X)\ot \mathbf{F}(Y) \to  \mathbf{F}(X\ot Y)$ and $\mathbf{F}^0\colon \mathbf{F}(\uno)\to \uno$, see \cite[Definition 2.4.1]{EGNO-book}.

A functor $(\mathbf{F}, s)\colon \cM \to \cN$ between $\cC$-module categories for a fusion category $\cC$ is a pair consisting of a $\ku$-linear functor $\mathbf{F}\colon \cM\to \cN$ together with natural isomorphisms $s_{M,C}\colon \mathbf{F}(M\modaction C) \to \mathbf{F}(M)\modaction C$ satisfying the conditions given in \cite[Definition 7.2.1]{EGNO-book}.

Given an indecomposable $\cC$-module category $\cM$, the dual category $\cC^*_{\cM}$ of $\cC$ with respect to $\cM$ is the (strict) fusion category $\operatorname{End}_{\cC}(\cM)$ of $\cC$-module endofunctors $(\mathbf{F},s):\cM\to\cM$, with tensor product given by composition of functors and duals by adjunctions. Associated to a tensor functor $\bT\colon \cD\to \cC$ between fusion categories $\cD$ and $\cC$ there is an induced $\cD$-module structure on $\cM$ by $\bT$ and we have a tensor functor $\bT^*_{\cM}\colon \cC_\cM^*\to \cD_\cM^*$ that maps $(\bF,s)$ to $(\bF,t)$, where $t_{M,D} = s_{M,\bT(D)}$, for every $M\in\cM$ and $D\in\cD$.

\subsection{$G$-categories}\label{subsec: G-action} In this subsection, we recall  $\ku$-linear abelian categories with action by a group. A category equipped with such an action is said to be a $G$-category, and it categorifies the notion of a $G$-set. This notion is relevant for describing crossed extensions and their equivalence with certain bicrossed products.

\begin{definition}{\rm(\cite[Definition 2.7.1]{EGNO-book})}
    Let $G$ be a group. A  \emph{(right) $G$-category} is a pair $(\cC,\fiz)$, where $\cC$ is a category equipped with a \emph{right action} $\fiz$ of $G$. That is, for every $g\in G$ there exist an exact $\ku$-linear functor $-\fiz g\colon \cC \to \cC$ and there are natural isomorphisms $\mathbf{R}^2_{g,h}\colon (-\fiz g)\fiz h \to -\fiz gh$ and $\mathbf{R}^0\colon \id_\cC\to -\fiz e$ satisfying
    \begin{align}\label{action-eq1}
        &(\mathbf{R}^2_{gh,k})_{X}((\mathbf{R}^2_{g,h})_X)\fiz k)= (\mathbf{R}^2_{g,hk})_{X}(\mathbf{R}^2_{h,k})_{X\fiz g},\\
        \label{action-eq2}&(\mathbf{R}^2_{g,e})_{X} \mathbf{R}^0_{X\fiz g} = \id_{X\fiz g} = (\mathbf{R}^2_{e,g})_{X}((\mathbf{R}^0_{X})\fiz g), 
    \end{align}
    for every $X\in\cC$, $g,h,k\in G$. Similarly, we define a \emph{left action} $\trid$, that is, for every $g\in G$ there is an exact functor $g\trid -$ with natural isomorphisms $\mathbf{L}^2_{g,h}: h\trid (g\trid -)\to gh\trid -$ and $\mathbf{L}^0: \id_{\cC}\to  e\trid -$ satisfying equations analogues to the previous ones.
\end{definition}

Analogously, the notion of a $G$-functor between $G$-categories can be defined.
    \begin{definition}{\rm(\cite[Definition 2.7.1]{EGNO-book})}
    Let $\cC$ and $\cD$ be two $G$-categories. A \emph{$G$-functor} between $\cC$ and $\cD$ is a pair $(\mathbf{F}, \{\mu^g\}_{g\in G})$, where $\mathbf{F}\colon \cC \to \cD$ is a functor and $\mu^g_X\colon F(X)\fiz g \to F(X\fiz g)$ is a family of natural isomorphism such that the following equation is satisfied
    \begin{align}\label{eq:definition-G-functor}
        \mathbf{F}((\mathbf{R}^2_{g,h})_X)(\mu^h_{X\fiz g})(\mu^g_X \fiz h) &= (\mu^{gh}_X)(\mathbf{R}^2_{g,h})_{\mathbf{F}(X)}.
    \end{align}
   
    Let $(\mathbf{F}, \{\mu^g\}_{g\in G})$ and $(\mathbf{H}, \{\theta^g\}_{g\in G})$ be two $G$-functors between the $G$-categories $\cC$ and $\cD$. A \emph{$G$-natural transformation} is a natural transformation $\sigma\colon \mathbf{F} \to \mathbf{H}$ such that $\sigma_{X\fiz g}\mu_X^g=\theta^g_X(\sigma_X\fiz g)$, for all $X\in\cC, g\in G$.
\end{definition}

A natural notion on a $G$-category $\cC$ is the notion of \emph{equivariant object}. These are pairs $(X, \{u_g\}_{g\in G})$, where $X$ is an object in $\cC$ and $\{u_g\}_{g\in G}$ is a family of isomorphisms $u_g\colon X\fiz g \to X$ such that 
\begin{eqnarray}\label{eq:equivariant-object-definition}
    u_{gh}(\mathbf{R}^2_{g,h})_X & = u_h(u_g\fiz h),
\end{eqnarray} for all $g,h\in G$, see \cite[Definition 2.7.2]{EGNO-book}.
We can define a category with the equivariant objects denoted as $\cC^G$, namely the \emph{equivariant category} or  \emph{$G$-equivariantization of $\cC$}. A morphism between equivariant objects $(X,\left\{u_g\right\}_{g\in G})$ and $(Y,\left\{v_g\right\}_{g\in G})$ is a morphism $f\colon X\to Y$ such that $v_g (f\fiz g) = f u_g$, for all $g\in G$.

The following lemma is useful to construct equivariant objects.

\begin{lemma}\label{lemma:G-funtor-preserva-obj-equivariant}
    Let $\cC$ and $\cD$ be $G$-categories, $(\mathbf{F}, \theta^g)\colon \cC \to \cD$ be a $G$-functor, and $(X, \{u_g\}_{g\in G})$ be an equivariant object in $\cC$. Then $(\mathbf{F}(X), \{\mathbf{F}(u_g)\theta^g_X\}_{g\in G})$ is an equivariant object in $\cD$.
\end{lemma}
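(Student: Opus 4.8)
The plan is to verify directly that the proposed data $w_g := \mathbf{F}(u_g)\,\theta^g_X \colon \mathbf{F}(X)\fiz g \to \mathbf{F}(X)$ satisfies the equivariant-object axiom \eqref{eq:equivariant-object-definition}, i.e.\ that $w_{gh}(\mathbf{R}^2_{g,h})_{\mathbf{F}(X)} = w_h(w_g\fiz h)$ for all $g,h\in G$. First I would confirm that source and target match: since $\theta^g_X\colon \mathbf{F}(X)\fiz g \to \mathbf{F}(X\fiz g)$ and $u_g\colon X\fiz g\to X$, composing with $\mathbf{F}(u_g)$ yields an isomorphism $\mathbf{F}(X)\fiz g\to \mathbf{F}(X)$, so each $w_g$ has the required form. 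It then remains to compare the two composites $(\mathbf{F}(X)\fiz g)\fiz h \to \mathbf{F}(X)$, which I would carry out by reducing each side to a common expression.

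For the left-hand side, I would start from $w_{gh}(\mathbf{R}^2_{g,h})_{\mathbf{F}(X)} = \mathbf{F}(u_{gh})\,\theta^{gh}_X(\mathbf{R}^2_{g,h})_{\mathbf{F}(X)}$ and apply the defining identity \eqref{eq:definition-G-functor} of the $G$-functor (with $\theta$ in place of $\mu$) to rewrite $\theta^{gh}_X(\mathbf{R}^2_{g,h})_{\mathbf{F}(X)}$ as $\mathbf{F}((\mathbf{R}^2_{g,h})_X)(\theta^h_{X\fiz g})(\theta^g_X\fiz h)$. Using functoriality of $\mathbf{F}$ to merge $\mathbf{F}(u_{gh})\mathbf{F}((\mathbf{R}^2_{g,h})_X) = \mathbf{F}\bigl(u_{gh}(\mathbf{R}^2_{g,h})_X\bigr)$, and then the equivariant-object axiom \eqref{eq:equivariant-object-definition} for $(X,\{u_g\})$ to replace $u_{gh}(\mathbf{R}^2_{g,h})_X$ by $u_h(u_g\fiz h)$, I arrive at $\mathbf{F}(u_h)\,\mathbf{F}(u_g\fiz h)(\theta^h_{X\fiz g})(\theta^g_X\fiz h)$.

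For the right-hand side, I would expand $w_h(w_g\fiz h) = \mathbf{F}(u_h)\,\theta^h_X(\mathbf{F}(u_g)\fiz h)(\theta^g_X\fiz h)$, using that $-\fiz h$ is a functor so $(w_g\fiz h) = (\mathbf{F}(u_g)\fiz h)(\theta^g_X\fiz h)$. The crucial step is the naturality of the natural isomorphism $\theta^h$ at the morphism $u_g\colon X\fiz g\to X$, which gives $\theta^h_X(\mathbf{F}(u_g)\fiz h) = \mathbf{F}(u_g\fiz h)\,\theta^h_{X\fiz g}$. Substituting this identity produces exactly $\mathbf{F}(u_h)\,\mathbf{F}(u_g\fiz h)(\theta^h_{X\fiz g})(\theta^g_X\fiz h)$, matching the expression found for the left-hand side and completing the verification.

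The argument is in essence a diagram chase, so the main obstacle is purely bookkeeping: tracking which object each instance of $\theta$ and of $\mathbf{R}^2$ is indexed by, and correctly distributing the action functor $-\fiz h$ across composites. Beyond functoriality and the two defining axioms, the single genuinely structural ingredient is the naturality of $\theta^h$ evaluated at $u_g$; recognizing that this is precisely what reconciles the two sides is the heart of the proof.
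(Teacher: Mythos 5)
Your proof is correct and is essentially the paper's own argument: the paper verifies the same identity via a commutative diagram whose three cells are exactly your three ingredients — the $G$-functor axiom \eqref{eq:definition-G-functor}, the naturality of $\theta^h$ applied at $u_g$, and $\mathbf{F}$ applied to the equivariance axiom \eqref{eq:equivariant-object-definition}. Your equational diagram chase is just the linearized form of that same diagram.
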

\begin{proof}
    We just need to show that the following diagram is commutative:
    \begin{equation*}
    \begin{adjustbox}{scale=.9}
                \begin{tikzcd}
            (\mathbf{F}(X)\fiz g)\fiz h \ar[r,"\theta^g_X \fiz h"] \ar[d, swap,"(\mathbf{R}^2_{g,h})_{\mathbf{F}(X)}"] & \mathbf{F}(X\fiz g)\fiz h \ar[r, "\mathbf{F}(u_g)\fiz h"] \ar[d, swap, "\theta^h_{X\fiz g}"] \ar[rd, phantom, "(ii)"] & \mathbf{F}(X)\fiz h \ar[d,"\theta^h_X"]\\
            \mathbf{F}(X)\fiz gh \ar[r, phantom, "(i)"] \ar[dr, swap, bend right, "\theta_X^{gh}"] & \mathbf{F}((X\fiz g)\fiz h) \ar[r,"\mathbf{F}(u_g \fiz h)"] \ar[d, swap, "\mathbf{F}((\mathbf{R}^2_{g,h})_X)"] \ar[rd, phantom, "(iii)"] & \mathbf{F}(X\fiz h) \ar[d, "\mathbf{F}(u_h)"]\\
            & \mathbf{F}(X\fiz gh) \ar[r, swap, "\mathbf{F}(u_{gh})"] & \mathbf{F}(X),
        \end{tikzcd}
            \end{adjustbox}
    \end{equation*}
    for all $g, h\in G$. The commutativity of the diagram $(i)$ follows from \eqref{eq:definition-G-functor}, $(ii)$ follows from the naturality of $\theta^h$, and $(iii)$ from applying $\mathbf{F}$ to \eqref{eq:equivariant-object-definition}.
\end{proof}

\subsubsection*{Module categories over $\vect_G$} Let $G$ be a finite group. We denote by $\vect_G$ the category of finite-dimensional $G$-graded vector spaces.  This is a fusion category with isomorphism classes of simple objects labeled by elements $g\in G$, where $g$ denotes the one dimensional vector space concentrated in degree $g$. The tensor product of simple objects is given by the product of the group $g\ot h = gh$, and the associativity isomorphisms are the ones from vector spaces.

There is an intimate relation between $G$-actions on a category $\cM$ and $\vect_G$-module category structures on $\cM$. Indeed, let $\cM$ be a module category over $\vect_G$, then the following lemma gives a structure of a $\ku$-linear $G$-category on $\cM$. We omit the proof because it is straightforward.

\begin{lemma}\label{lemma:G-cat-G-functor}
	Let $\cM$ be a (right) $\vect_G$-module category. Then there is an induced (right) $G$-action on $\cM$ defined as follows:
	\begin{enumerate}[leftmargin=*,label=\rm{(\roman*)}]
	    \item the functor $-\bar{\fiz} g $ is given by $ - \modaction g$  for each $g\in G$,
		\item the natural isomorphism $(\mathbf{R}^2_{g,h})_M \colon (M\bar{\fiz} g)\bar{\fiz} h \to M\bar{\fiz} gh$ is given by $m_{M, g, h}^{-1}$,
        \item the isomorphism $\mathbf{R}^0_M\colon M \to M\bar{\fiz} e$ is given by $(m^0_M)^{-1}$.
	\end{enumerate}
Furthermore, a $\vect_G$-module functor $(\mathbf{F}, s)$ induces a $G$-functor $(\mathbf{F},\theta^g)$, with $\theta^g_M = (s_{M,g})^{-1}$.
\end{lemma}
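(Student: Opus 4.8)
The plan is to verify two things separately: that the data in (i)--(iii) satisfies the right-action axioms \eqref{action-eq1} and \eqref{action-eq2}, and that the pair $(\mathbf{F},\theta^g)$ with $\theta^g_M=(s_{M,g})^{-1}$ satisfies the $G$-functor axiom \eqref{eq:definition-G-functor}. The organizing idea is that every constraint of the induced $G$-structure is by definition the inverse of a constraint of the $\vect_G$-module structure, so each axiom to be checked is the inverse of a module-category coherence condition, specialized to the simple objects $g,h,k\in G$ and to the unit $\uno=e$. The crucial simplification is that the associativity and unit constraints of $\vect_G$ are those of vector spaces, hence are identities on simple objects; consequently all occurrences of the associator $\alpha$ and unitors $\ell,r$ of $\vect_G$ drop out after specialization, and what remains is an exact match.

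I would first treat the action. Each functor $-\bar{\fiz} g=-\modaction g$ is exact and $\ku$-linear since $\modaction\colon\cM\times\vect_G\to\cM$ is a $\ku$-linear bifunctor, exact in the first variable; and $(\mathbf{R}^2_{g,h})_M=m_{M,g,h}^{-1}$, $\mathbf{R}^0_M=(m^0_M)^{-1}$ are natural in $M$ because $m_{-,g,h}$ and $m^0_{-}$ are. To obtain \eqref{action-eq1}, invert both sides; the equivalent identity
\[
(m_{X,g,h}\modaction\id_k)\circ m_{X,gh,k}=m_{X\modaction g,h,k}\circ m_{X,g,hk}
\]
is precisely the module-associativity (pentagon) axiom of \cite[Definition 7.1.1]{EGNO-book} evaluated at $C=g$, $D=h$, $E=k$, with the trivial associator of $\vect_G$. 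Likewise, inverting \eqref{action-eq2} yields the two identities $m^0_{X\modaction g}\circ m_{X,g,e}=\id_{X\modaction g}$ and $(m^0_X\modaction\id_g)\circ m_{X,e,g}=\id_{X\modaction g}$, which are the unit axioms of a module category once we use $\uno=e$ and the fact that the unitors of $\vect_G$ are trivial.

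For the functor part, I would set $\theta^g_M=(s_{M,g})^{-1}$ and recall the associativity axiom for a module functor from \cite[Definition 7.2.1]{EGNO-book}, which (writing $m$ and $\tilde m$ for the module constraints of $\cM$ and $\cN$) reads $\tilde m_{\mathbf{F}(M),C,D}\circ s_{M,C\ot D}=(s_{M,C}\modaction\id_D)\circ s_{M\modaction C,D}\circ\mathbf{F}(m_{M,C,D})$. Taking inverses and specializing to $C=g$, $D=h$ gives
\[
\mathbf{F}(m_{M,g,h}^{-1})\circ(s_{M\modaction g,h})^{-1}\circ((s_{M,g})^{-1}\modaction\id_h)=(s_{M,gh})^{-1}\circ(\tilde m_{\mathbf{F}(M),g,h})^{-1},
\]
which, under the dictionary $(\mathbf{R}^2_{g,h})_M=m_{M,g,h}^{-1}$, $(\mathbf{R}^2_{g,h})_{\mathbf{F}(M)}=\tilde m_{\mathbf{F}(M),g,h}^{-1}$, $\theta^{\bullet}=(s_{\bullet})^{-1}$ and $-\fiz h=-\modaction h$ on $\cN$, is exactly \eqref{eq:definition-G-functor}.

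Finally, the only real obstacle is bookkeeping rather than mathematics: one must pin down the precise orientation of the module constraints $m,m^0,s$ and the exact form of the pentagon and unit axioms in \cite[Definitions 7.1.1, 7.2.1]{EGNO-book}, and keep careful track of the identification $\uno=e$ together with the vanishing of the $\vect_G$ unitors, so that the specialized module axioms line up on the nose with \eqref{action-eq1}, \eqref{action-eq2}, and \eqref{eq:definition-G-functor}. Once the conventions are fixed there is no genuine difficulty, which is why the statement is asserted without proof.
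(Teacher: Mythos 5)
Your verification is correct: inverting the module-category pentagon and unit constraints (which become strict after specializing to the simple objects $g,h,k$ of $\vect_G$, whose associator and unitors are trivial) yields exactly \eqref{action-eq1}--\eqref{action-eq2}, and inverting the module-functor compatibility axiom yields \eqref{eq:definition-G-functor}. This is precisely the direct check the paper has in mind when it omits the proof as straightforward, so your proposal matches the intended argument.
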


\subsection{Exact factorizations and extensions}
In this subsection, we recall definitions and results of exact factorizations and extensions of fusion categories. We start with exact factorizations of groups. An \emph{exact factorization} of a group $\Sigma$ is a pair $(G, \Gamma)$ of subgroups of $\Sigma$ such that $\Sigma = G \Gamma$ and $G \cap \Gamma = \{e\}$.  We can describe this notion in terms of a \emph{matched pair} of groups, which is a collection 
$(G, \Gamma, \tridb, \fizb)$ such that $G$ and $\Gamma$ are groups, and $\tridb$ and $\fizb$ are left and right actions 
$\xymatrix{\Gamma & \Gamma \times G \ar  @{->}[r]^{\quad\tridb}\ar  @{->}[l]_{\fizb\quad } & G }$ satisfying
\begin{align*}
	(kt) \fizb g &= \left(k \fizb (t \tridb g)\right)(t \fizb g) & \text{ and } &&
	k \tridb (g h) &=\left(k \tridb g\right)\left((k \fizb g) \tridb h \right),
 \end{align*}
for all $k, t \in \Gamma$, $g, h \in G$.

A matched pair of groups defines an exact factorization as follows. 
Given a matched pair of groups $(G, \Gamma, \tridb, \fizb)$, we define  $G \bowtie 
\Gamma$ as the set $G \times \Gamma$ equipped with the product
\begin{align*}
	(h, k)(g , t) &= \left(h (k \tridb g), (k \fizb g) t\right),
	& k, t &\in \Gamma, \ g, h \in G.
\end{align*}
The group $G\bowtie \Gamma$ is known as the \emph{Zappa–Sz\'ep product} or \emph{bicrossed product} of the groups $G$ and $\Gamma$, and it is an exact factorization of them. Furthermore, every exact factorization of $G$ and $\Gamma$ can be obtained in this way; see \cite[Proposition 2.4]{Takeuchi-matched}.

 Let $\cB$ be a fusion category, and let $\cA$, $\cC$ be fusion subcategories of $\cB$. The intersection category $\cA\cap\cC$ is the full subcategory of $\cB$ whose objects are in $\cA$ and $\cC$. The category $\cB$ is an \emph{exact factorization} of $\cA$ and $\cC$ if $\cA\cap\cC \simeq \vect$, the category of finite-dimensional vector spaces over $\ku$, and $\fpdim(\cB) = \fpdim(\cA)\fpdim(\cC)$, see \cite[Definition 3.4]{G-exact-factorization}. In that case, we write $\cB = \cA \bullet \cC$. Next, we will define the set $\operatorname{ExFac}(\cA,\cC)$.
 
\begin{definition}
     Let $\cA_1 = \cA_1'\bullet \cA_1''$ and $\cA_2 = \cA_2' \bullet \cA_2''$ be exact factorizations of fusion categories, and $\iota_i'\colon \cA_i'\hookrightarrow\cA_i$, $\iota_i''\colon \cA_i''\hookrightarrow\cA_i$ be the corresponding inclusion functors. We say that the exact factorizations $\cA_1$ and $\cA_2$  are \emph{equivalent} if there exist tensor equivalences $\mathbf{T}\colon \cA_1\to \cA_2$, $\mathbf{T}'\colon \cA_1'\to \cA_2'$ and $\mathbf{T}''\colon \cA_1''\to \cA_2''$ such that the following diagram commutes up to a monoidal natural transformation
    \begin{equation*}
    \begin{tikzcd}
        \cA_1' \ar[r, hook, "\iota_1'"]\ar[d, "\mathbf{T'}"] & \cA_1 \ar[d, "\mathbf{T}"] & \cA_1''\ar[d, "\mathbf{T}''"] \ar[l, swap, hook', "\iota_1''"] \\
\cA_2'\ar[r, hook, "\iota_2'"] & \cA_2 & \cA_2''\ar[l, swap, hook', "\iota_2''"].
\end{tikzcd}
    \end{equation*}
    Given $\cA$ and $\cC$ fusion categories, we denote by $\operatorname{ExFac}(\cA,\cC)$ the set of equivalence classes of exact factorizations $\cB= \cA\bullet \cC$.
\end{definition}

 Let $\cM$ be a finite semisimple abelian category and $\operatorname{End}{(\cM)}$ be the category of right endofunctors of $\cM$. A fusion category $\cD$ is an \emph{extension of $\cC$ by  $\cA$ with respect to $\cM$} if there exist tensor functors
\begin{align}\label{exact-sequence}
    \cA\xhookrightarrow{\mathbf{\iota}} \cD \xrightarrow{\mathbf F} \cC\boxtimes\operatorname{End}{(\cM)^{op}},
\end{align}
such that 
\begin{enumerate}[leftmargin=*,label=(\roman*)]
\item $\mathbf{\iota}$ is injective, that is, $\mathbf{\iota}$ is a fully faithful embedding,
\item $\mathbf{F}$ is surjective, that is, every object $X\in\cC\boxtimes\operatorname{End}{(\cM)^{op}}$ is a subquotient of $\mathbf{F}(D)$ for some $D\in \cD$ (see equivalent conditions in \cite[Lemma 3.1]{Natale-Brugiere}),
\item\label{item3} $\iota(\cA) = \ker(\mathbf{F})$,  where $\ker(\mathbf{F})$ is the subcategory of $X\in\cD$ such that $\mathbf{F}(X)\in \operatorname{End}{(\cM)}$, and \item $\mathbf F$ is normal, that is, for any $X\in\cD$ there exists a subobject $X_0\subseteq X$ such that $\mathbf{F}(X_0)$ is the largest subobject of $\mathbf{F}(X)$ such that $\mathbf{F}(X_0)\simeq \uno\boxtimes \mathbf{G}$ for some $\mathbf{G}\in \operatorname{End}(\cM)$. 
\item The category $\cM$ admits a canonical structure of a right $\cA$-module category by taking $\rho\colon \cA\to \operatorname{End}(\cM)^{op}$ as $\rho = \mathbf{F}\circ \iota$, which is well-defined by \ref{item3}. We require $\cM$ to be indecomposable as an $\cA$-module category.
\end{enumerate}
Notice that $\mathbf{F}$ is faithful by \cite[Lemma 2.1]{Natale-Brugiere}. 

Next, we define the set $\operatorname{Ext}(\cA,\cC,\cM)$, see also \cite[\S 3]{Natale-exact-sequence-G}.

\begin{definition}\label{def:Ext}
    Two extensions of fusion categories 
\begin{align*}
     \cA_1'\xhookrightarrow{\mathbf{\iota_1}} \cA_1 \xrightarrow{\mathbf{F}_1} \cA_1''\boxtimes\operatorname{End}{(\cM_1)}^{op}, &&  \cA_2'\xhookrightarrow{\mathbf{\iota}_2} \cA_2 \xrightarrow{\mathbf{F}_2} \cA_2''\boxtimes\operatorname{End}{(\cM_2)}^{op},
\end{align*}
are \emph{equivalent} if there exist tensor equivalences $\mathbf{T}\colon \cA_1\to \cA_2$, $\mathbf{T'}\colon \cA_1'\to \cA_2'$ and $\mathbf{T''}\colon \cA_1''\to \cA_2''$, and a $\ku$-linear equivalence $\mathbf{S}\colon \cM_1\to \cM_2$ such that the following diagrams commute up to monoidal natural transformations
\begin{equation*}
    \begin{tikzcd}
        \cA_1' \ar[r, hook, "\iota_1"]\ar[d, "\mathbf{T'}"] & \cA_1\ar[r, "\mathbf{F}_1"] \ar[d, "\mathbf{T}"] & \cA_1''\boxtimes\operatorname{End}(\cM_1)^{op}\ar[d, "\mathbf{T}''\boxtimes\operatorname{End}(\mathbf{S})"] \\
\cA_2'\ar[r, hook, "\iota_2"] & \cA_2\ar[r, "{\mathbf F}_2"] & \cA_2''\boxtimes\operatorname{End}(\cM_2)^{op},       
    \end{tikzcd}
\end{equation*}
where $\rho_1$ and $\rho_2$ are the tensor functors defining the module category structure, and $\operatorname{End}(\mathbf{S})$ is the tensor functor induced by $\mathbf{S}$ that maps an endofunctor $\psi\colon \cM_1\to \cM_1$ to $\mathbf{S}\circ \psi\circ\mathbf{S}^{-1}$. Given $\cA$ and $\cC$ fusion categories and $\cM$ an exact indecomposable $\cA$-module category, we denote by $\operatorname{Ext}(\cA,\cC,\cM)$ the set of equivalence classes of extensions $\cA\xhookrightarrow{\iota}\cD\xrightarrow{\mathbf{F}} \cC\boxtimes \operatorname{End}(\cM)^{op}$.
\end{definition}

\begin{remark}\label{remark:estable-by-semisimplicity}
    The definitions of exact factorization and extensions can be done in the finite tensor category context (without the semisimplicity hypothesis). Both definitions are stable under the semisimplicity condition, that is, if both $\cA$ and $\cC$ are semisimple then the possible factorizations and extensions are semisimple too, see \cite[Corollary 3.13]{generalization-G-nonsemisimple} and \cite[Theorem 3.8]{EG-exact-sequence}. 
\end{remark}

\begin{remark}
    Notice that in the definition of $\operatorname{ExFac}(\cA,\cC)$ and $\operatorname{Ext}(\cA,\cC,\cM)$ the categories involved in the label are defined up to tensor (or module) equivalence. That is, if $\cA'$ and $\cC'$ are tensor equivalent to $\cA$ and $\cC$ respectively, then $\operatorname{ExFac}(\cA,\cC) = \operatorname{ExFac}(\cA',\cC')$, as the corresponding equivalences induce equivalence of exact factorizations, similarly with extensions.
\end{remark}

\subsection{Basak-Gelaki Correspondence}

In the work of Basak and Gelaki \cite[Theorem 5.1]{generalization-G-nonsemisimple}, the authors stated a correspondence between exact factorizations and extensions of tensor categories. They showed that this correspondence is one-to-one in some specific cases, see \cite[Corollaries 5.4 and 5.6]{generalization-G-nonsemisimple}. For a fusion category $\cA$ and an indecomposable right $\cA$-module category $\mathcal{M}$, the correspondence is given by functions 
\begin{align*}
   & \varphi_{\cA, \mathcal{M}}\colon \operatorname{Ext}(\cC,\cA,\cM)\to \operatorname{ExFac}(\cC,\cA^*_{\cM}), \text{and}\\ &\psi_{\cA,\cM}\colon \operatorname{ExFac}(\cC, \cA)\to \operatorname{Ext}(\cC,\cA^*_{\cM}, \cM)
\end{align*}
 that will be described next.

\subsubsection*{The map $\varphi_{\cA, \mathcal{M}}$}\label{subsubsect:mapa-phi}
Given an extension $\cA\xhookrightarrow{\iota}\cD\xrightarrow{\mathbf{F}} \cC\boxtimes \operatorname{End}(\cM)^{op}$, we map it to the exact factorization $\cA^*_{\cM}\xhookrightarrow{\iota_{{\cA}^*_{\cM}}} \cD^*_{\cN} \xhookleftarrow{\mathbf{F}^*} (\cC\boxtimes \operatorname{End}(\cM)^{op})^*_{\cN}$, where $\cN =  \operatorname{Ind}_{\cA}^{\cD}(\cM) = \cM\boxtimes_{\cA}\cD$
is a indecomposable $\cD$-module category (see the proof of \cite[Theorem 5.1]{generalization-G-nonsemisimple})  with structure induced by $\iota$, and $\iota_{{\cA}^*_{\cM}}$ maps an $\cA$-module functor $\mathbf{G}: \cM\to \cM$ to the induced $\cD$-module functor $ \mathbf{G}\boxtimes_{\cA}\id_{\cD} = \operatorname{Ind}_{\cA}^{\cD}(\mathbf{G}):\cN\to \cN$. It follows from the proof \cite[Theorem 5.1]{generalization-G-nonsemisimple} that $(\cC\boxtimes \operatorname{End}(\cM)^{op})^*_{\cN} \simeq \cC$. We can consider $\widetilde{\cN} = \cC\boxtimes\cM$ as a $\cC\boxtimes \operatorname{End}(\cM)^{op}$-module category with the canonical action in each components (the $\cC$-action by taking tensor product on the right, and the $\operatorname{End}(\cM)^{op}$-action by evaluation), then $\widetilde{\cN}$ is also a $\cD$-module category by restriction of $\mathbf{F}$, and $\mathbf{F}^*\colon (\cC\boxtimes \operatorname{End}(\cM)^{op})^*_{\widetilde{\cN}} \to \cD_{\widetilde{\cN}}$ just view a module functor over $\cC\boxtimes \operatorname{End}(\cM)^{op}$ as a $\cD$-module functor. By \cite[Theorem 2.9]{EG-exact-sequence}, $\widetilde{\cN}\simeq \cN$ as right $\cD$-module categories, hence $\cD^*_{\widetilde{\cN}}\simeq \cD^*_\cN$. Under this identification, we obtain the desired exact factorization by \cite[Theorem 5.1]{generalization-G-nonsemisimple}.

\subsubsection*{The map $\psi_{\cA,\cM}$}\label{subsubsect:mapa-psi}
Given an exact factorization $\cA \xhookrightarrow{\nu} \cB \xhookleftarrow{\kappa} \cC$, we map it to the extension $\cA^*_{\cM}\xhookrightarrow{\iota_{{\cA}^*_{\cM}}}\cB^*_{\cN}\xrightarrow{\kappa^*} \cC^*_{\cN}$, where $\cN =  \operatorname{Ind}_{\cA}^{\cB}(\cM) =\cM\boxtimes_{\cA}\cB$
is a $\cB$-module category with the structure induced  by $\nu$,  $\iota_{{\cA}^*_{\cM}}$ maps an $\cA$-module functor $\mathbf{G}: \cM\to \cM$ to the induced $\cB$-module functor $\mathbf{G}\boxtimes_{\cA}\id_{\cB} = \operatorname{Ind}_{\cA}^{\cB} (\mathbf{G}):\cN\to\cN$, since $\cN\simeq\cM\boxtimes \cC$ as $\cC$-module categories,   $\kappa^*\colon \cB^*_{\cN}\to  \cC^*_{\cN}$ is defined by just viewing a $\cB$-module functor $\cN\to \cN$ as a $\cC$-module functor, and $\cC^*_{\cN}\simeq \cC\boxtimes \operatorname{End}(\cM)^{op}$. By \cite[Theorem 5.1]{generalization-G-nonsemisimple}, we have the equivalent extension $\cA^*_{\cM}\xhookrightarrow{\iota_{{\cA}^*_{\cM}}}\cB^*_{\cN}\xrightarrow{\kappa^*} \cC\boxtimes \operatorname{End}(\cM)^{op}$ given an alternative expression of the image of $\psi_{\mathcal{A}, \mathcal{M}}$.

\begin{remark}
The above correspondence is one-to-one when $\cM=\vect$, see \cite[Corollary 5.6]{generalization-G-nonsemisimple}.
\end{remark}

\begin{remark}
    In \cite{generalization-G-nonsemisimple}, the correspondence is stated in a more general hypothesis; it is valid for finite tensor categories without the semisimplicity, but by Remark \ref{remark:estable-by-semisimplicity} we can restrict ourselves to the semisimple case.
\end{remark}

\subsection{Matched pairs and bicrossed product of fusion categories} In \cite[\S 4]{mp-nuestro}, we introduced the notion of bicrossed product of fusion categories as a way to construct exact factorizations. At the Grothendieck ring level, these characterize all possible exact factorizations; see \cite[Theorem 3.14]{mp-nuestro}. Let $\mathcal{A}$ and $\mathcal{C}$ be fusion categories and let $G$ and $\Gamma$ be finite groups. A \emph{matched pair of fusion categories} $(\mathcal A, \mathcal C, G, \Gamma, \tridb, \fizb, \trid, \fiz)$ consists of:
    \begin{itemize}
    \item faithful gradings 
        $\cA = \bigoplus_{g\in G} \cA_g$ and $ \cC =\bigoplus_{k\in \Gamma} \cC_k$,
    \item a matched pair of groups $(G, \Gamma, \tridb, \fizb)$,
    \item categorical left $\ku$-linear action $\trid$ of $\Gamma$ in $\cA$ and a categorical right $\ku$-linear action $\fiz$ of $G$ in $\cC$ such that $k \trid \cA_g = \cA_{k\tridb g}$ and $\cC_k \fiz g= \cC_{k \fizb g}$,
    \item two collections of natural isomorphisms $\gamma=(\gamma^k)_{k\in \Gamma}$ and $\eta=(\eta^g)_{g\in G}$ given by
    \begin{align*}
        &\gamma_{A,A'}^k\colon k\trid (A\ot A') \xrightarrow{\quad\simeq\quad} (k\trid A)\ot (k\fizb\degf{A})\trid A', & A,A'\in\Irr(\cA),\\
        &\eta_{C,C'}^g\colon (C\ot C')\fiz g \xrightarrow{\quad\simeq\quad} C\fiz (\degf{C'}\tridb g)\ot C'\fiz g, & C,C'\in\Irr(\cC),
    \end{align*}
 \item collections $\gamma_{0}^k\colon k\trid \mathbf{1} \to \mathbf{1}$ and $\eta_{0}^g\colon \mathbf{1}\fiz g \to \mathbf{1}$, $k\in \Gamma$, $g\in G$,
\end{itemize}
 satisfying the diagrams \cite[Definition 4.1]{mp-nuestro}. Using this data, we can construct the \emph{bicrossed product fusion category} $\mathcal A \bowtie \mathcal C$, which is an exact factorization between $\mathcal{A}$ and $\mathcal{C}$, see \cite[Corollary 4.9]{mp-nuestro}. The category $\mathcal A \bowtie \mathcal C$ is $\mathcal A \boxtimes \mathcal C$ as an abelian category. To define the tensor product, we pick a skeleton for $\cA\boxtimes \cC$ (that always exists for fusion categories, see \cite[Remark 2.8.7]{EGNO-book}) and then we define 
    \begin{align*}
    A\bowtie C \ot A'\bowtie C' = A\ot \degf{C} \trid A'\bowtie C\fiz \degf{A'} \ot C', && A,A'\in\Irr(\mathcal{A}), C,C'\in\Irr(\cC). 
\end{align*} 

Here $A\bowtie C$ is the isomorphism class chosen for the object $A\boxtimes C$. The associativity is necessarily non-trivial (as we used a skeleton) and it is given in \cite[\S 4]{mp-nuestro}.

\subsection{Crossed extensions of fusion categories} In this subsection, we recall the construction of crossed extensions of fusion categories introduced in \cite{Natale-crosssed-action}. These are extensions of a fusion category $\cC$ by $\operatorname{Rep} G$ (with $G$ a finite group) with respect to the $\operatorname{Rep} G$-module category $\vect$ of finite-dimensional vector spaces.

Let $(G,\Gamma, \tridb, \fizb)$ be a matched pair of groups. A \emph{$(G, \Gamma)$-crossed action} on a fusion category $\cC$ consists of a faithful grading on $\cC=\bigoplus_{k\in \Gamma} \cC_k$,  a categorical right $\ku$-linear action $\fiz$ of $G$ in $\cC$ such that $\cC_k \fiz g= \cC_{k \fizb g}$, and  a family of natural isomorphisms $\eta^g_{C, C'}: (C\ot C')\fiz g\to C\fiz (|C'|\tridb g)\ot C'\fiz g$, for all $g\in G$, $C, C'\in\cC$, such that the diagrams in \cite[Definition 4.1]{Natale-crosssed-action} commute. A crossed action give rise to an extension $\cC^{(G,\Gamma)}$ such that
\begin{align}\label{seq:sucesion-sonia}
    \operatorname{Rep} G\xhookrightarrow{} \cC^{(G,\Gamma)} \xrightarrow{} \cC.
\end{align}
The objects of $\cC^{(G,\Gamma)}$ are equivariant objects $(X,\{u_g\}_{g\in G})$ in $\cC^G$ with the monoidal structure given in \cite[Theorem 5.1]{Natale-crosssed-action}. Reciprocally,  any exact sequence $\operatorname{Rep} G\xhookrightarrow{} \mathcal{D} \xrightarrow{} \cC$ is equivalent to \eqref{seq:sucesion-sonia},  for some $(G, \Gamma)$-crossed action on $\cC$, see \cite[Theorem 1.1]{Natale-exact-sequence-G}. The category $\cC^{(G,\Gamma)}$ is called a $(G,\Gamma)$-\emph{crossed extension}.

\begin{remark}
    The results from Natale are stated in a more general hypothesis; they are valid for finite tensor categories without the semisimplicity, but by Remark \ref{remark:estable-by-semisimplicity} we can restrict ourselves to the semisimple case.
\end{remark}

\section{Equivalence of crossed extension and bicrossed products with $\vect_G$}\label{section:main-result}

In this section, we use the Basak-Gelaki correspondence to show that a crossed extension $\cC^{(G,\Gamma)}$ corresponds to a bicrossed product $\vect_G\bowtie \cC$. Since all extensions of $\cC$ by $\operatorname{Rep} G$ with respect to $\vect$ are given by crossed extensions, this shows that every exact factorization of the form $\vect_G\bullet\cC$ is a bicrossed product. Our main results are the following.

\begin{theorem}\label{teo:correspondencia-exact-con-bicrossed-product-con-vectG}
 Let $\cC$ be a fusion category and consider a $(G,\Gamma)$-crossed action on $\cC$. Then there exists a matched pair between $\vect_G$ and $\cC$ such that the crossed extension $\cC^{(G,\Gamma)}$ corresponds to a bicrossed product $\vect_G\bowtie \cC$ under the Basak-Gelaki correspondence.    
\end{theorem}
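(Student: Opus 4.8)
The plan is to run the Basak--Gelaki correspondence for the pair $(\operatorname{Rep} G,\vect)$ and to identify its output with a bicrossed product. First I would record the two Morita dualities $(\operatorname{Rep} G)^*_\vect\simeq\vect_G$ and $(\vect_G)^*_\vect\simeq\operatorname{Rep} G$, where in each case $\vect$ is the module category coming from the relevant forgetful (fiber) functor. With $\cA=\operatorname{Rep} G$ and $\cM=\vect$ one has $\operatorname{End}(\cM)^{op}=\vect$, so the defining sequence \eqref{seq:sucesion-sonia} is exactly an extension $\operatorname{Rep} G\hookrightarrow\cC^{(G,\Gamma)}\to\cC\boxtimes\operatorname{End}(\cM)^{op}$ representing a class in $\operatorname{Ext}(\cC,\operatorname{Rep} G,\vect)$. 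Since the correspondence is a bijection when $\cM=\vect$, it is equivalent to produce a matched pair realizing $\vect_G\bowtie\cC$ and then to check that $\psi_{\vect_G,\vect}(\vect_G\bowtie\cC)$ agrees, as an extension, with $\cC^{(G,\Gamma)}$; I would work in this direction because the bicrossed product is specified by explicit $6j$-data.

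Next I would assemble the matched pair of fusion categories. The groups, the actions $\tridb,\fizb$, the $\Gamma$-grading on $\cC$, the $G$-action $\fiz$ on $\cC$, and the isomorphisms $\eta^g_{C,C'},\eta^g_0$ are taken verbatim from the given $(G,\Gamma)$-crossed action. The remaining data on the $\vect_G$ side are canonical: $\vect_G$ carries its tautological faithful $G$-grading, the $\Gamma$-action is $k\trid g:=k\tridb g$ on simple objects, and since $\vect_G$ has trivial associator one takes $\gamma^k_{g,g'}$ and $\gamma^k_0$ to be identities. The compatibilities $k\trid(\vect_G)_g=(\vect_G)_{k\tridb g}$ hold by construction. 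I would then verify that the coherence diagrams of \cite[Definition 4.1]{mp-nuestro} split into the constraints involving $\fiz$ and $\eta$, already guaranteed by \cite[Definition 4.1]{Natale-crosssed-action}, and those involving only $\trid,\gamma$, which hold trivially because $\vect_G$ is pointed with trivial associator and $\tridb$ is a genuine group action. This produces $\vect_G\bowtie\cC$, with tensor product on simples $g\bowtie C\ot g'\bowtie C'=g(|C|\tridb g')\bowtie(C\fiz g')\ot C'$.

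For the identification under $\psi_{\vect_G,\vect}$ I would compute the induced module category $\cN=\operatorname{Ind}_{\vect_G}^{\vect_G\bowtie\cC}(\vect)\simeq\cC$ and analyze the dual $(\vect_G\bowtie\cC)^*_\cN=\operatorname{End}_{\vect_G\bowtie\cC}(\cN)$. Restricting the module structure along $\vect_G\hookrightarrow\vect_G\bowtie\cC$ and applying Lemma \ref{lemma:G-cat-G-functor} endows $\cN$ with a $G$-category structure, so a module endofunctor becomes a $G$-functor; by Lemma \ref{lemma:G-funtor-preserva-obj-equivariant} its value on the generating object acquires a canonical equivariant structure. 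The central computation is to show that such a module endofunctor is determined by an equivariant object $(X,\{u_g\}_{g\in G})$ of $\cC^G$: the isomorphisms $u_g$ arise from the action of the simple objects $g\in\vect_G$ via $\trid,\gamma$, while the $\cC$-module compatibility, governed by the bicrossed tensor product and hence by $\fiz$ and $\eta$, reproduces precisely the crossed compatibility of \cite[Theorem 5.1]{Natale-crosssed-action}. Tracking composition of endofunctors against Natale's monoidal product then yields a tensor equivalence $(\vect_G\bowtie\cC)^*_\cN\simeq\cC^{(G,\Gamma)}$ intertwining the inclusions of $\operatorname{Rep} G$ and the projections onto $\cC$, i.e.\ an equivalence of extensions.

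The main obstacle is this final matching of monoidal structures: one must verify that the associativity constraints (the $6j$-symbols) of $\operatorname{End}_{\vect_G\bowtie\cC}(\cN)$, assembled from the bicrossed associator of \cite[\S4]{mp-nuestro}, coincide with those placed on $\cC^{(G,\Gamma)}$ in \cite[Theorem 5.1]{Natale-crosssed-action}, and that the equivalence respects the exact-sequence diagrams up to monoidal natural transformation rather than merely identifying the underlying fusion categories. The bookkeeping of the coherence data $\eta,\gamma$ through the induction and the passage to endofunctors is where the calculation concentrates; pointedness of $\vect_G$ is what keeps the $\gamma$-side trivial and isolates the genuine content on the $\eta$-side, matching it against Natale's crossed action.
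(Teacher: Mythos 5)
Your proposal follows essentially the same route as the paper: you construct the same matched pair (the paper's Proposition \ref{prop:dato-sonia-para-armar-matched-pair}, with the $\gamma$-side trivial by pointedness of $\vect_G$), apply $\psi_{\vect_G,\vect}$ to $\vect_G\bowtie\cC$ using $\cN\simeq\cC$, and identify $(\vect_G\bowtie\cC)^*_\cC$ with $\cC^{(G,\Gamma)}$ via equivariant objects using exactly the two $G$-category lemmas, before checking the equivalence of extensions --- which is the paper's Theorem \ref{teo:eq-Natale} plus Lemma \ref{lemma:equivalencia-sucesiones}. The steps you flag as the main computational burden (matching the monoidal structures and verifying the extension diagrams commute up to monoidal natural isomorphism) are precisely where the paper's explicit calculations with $\mathbf{S}$, $\mathbf{T}$, $\iota_2$, and $\mathbf{F}_2$ are concentrated.
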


\begin{theorem}\label{teo:teorema-2}
Let $\cD = \vect_G\bullet \cC$ be an exact factorization of fusion categories. Then $\cD$ is tensor equivalent to a bicrossed product $\vect_G\bowtie \cC$.    
\end{theorem}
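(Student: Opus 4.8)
The plan is to deduce Theorem \ref{teo:teorema-2} from Theorem \ref{teo:correspondencia-exact-con-bicrossed-product-con-vectG} by running the Basak--Gelaki correspondence backwards, starting from the given exact factorization rather than from a crossed action. First I would invoke the one-to-one correspondence of \cite[Corollary 5.6]{generalization-G-nonsemisimple}, which applies precisely because the relevant module category is $\cM = \vect$. The strategy is to turn the factorization $\cD = \vect_G \bullet \cC$ into an extension so that the classification of \cite[Theorem 1.1]{Natale-exact-sequence-G} applies, then use the present paper's main theorem to identify that extension's partner factorization as a bicrossed product, and finally match it back to $\cD$.

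Concretely, the key steps run as follows. First, choose the module category $\cM = \vect$ over $\cA := \vect_G$; note that $(\vect_G)^*_{\vect} \simeq \operatorname{Rep} G$, so the dual of $\vect_G$ with respect to the regular module category is $\operatorname{Rep} G$. Applying the map $\psi_{\vect_G, \vect}$ to the exact factorization $\vect_G \bullet \cC$ produces an extension of the form $\operatorname{Rep} G \xhookrightarrow{} \cD^*_{\cN} \xrightarrow{} \cC \boxtimes \operatorname{End}(\vect)^{op} \simeq \cC$, since $\operatorname{End}(\vect)^{op} \simeq \vect$. By \cite[Theorem 1.1]{Natale-exact-sequence-G}, this extension is equivalent to a crossed extension $\cC^{(G,\Gamma)}$ for some matched pair $(G,\Gamma)$ and some $(G,\Gamma)$-crossed action on $\cC$. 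Second, apply Theorem \ref{teo:correspondencia-exact-con-bicrossed-product-con-vectG} to that crossed action: it yields a matched pair between $\vect_G$ and $\cC$ such that $\cC^{(G,\Gamma)}$ corresponds, under the \emph{forward} map $\varphi_{\operatorname{Rep} G, \vect}$, to the bicrossed product $\vect_G \bowtie \cC$. Third, use that $\varphi_{\cA,\cM}$ and $\psi_{\cA,\cM}$ are mutually inverse bijections when $\cM = \vect$ to conclude that the exact factorization we started from, $\cD$, is equivalent as an exact factorization to $\vect_G \bowtie \cC$; passing to the underlying tensor category gives the desired tensor equivalence $\cD \simeq \vect_G \bowtie \cC$.

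I expect the main obstacle to be bookkeeping the duals and the module-category bases correctly so that the two applications of the correspondence compose cleanly. In particular one must verify that the extension produced by $\psi_{\vect_G,\vect}$ from $\cD$ is genuinely an extension of $\cC$ \emph{by} $\operatorname{Rep} G$ with respect to $\vect$ (so that Natale's classification is applicable), which requires identifying $\cC^*_{\cN} \simeq \cC \boxtimes \operatorname{End}(\vect)^{op}$ and checking that the kernel subcategory is $\operatorname{Rep} G = (\vect_G)^*_{\vect}$; this uses the identification $\widetilde{\cN} \simeq \cN$ of \cite[Theorem 2.9]{EG-exact-sequence} recalled in the description of $\psi$. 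The other delicate point is that Theorem \ref{teo:correspondencia-exact-con-bicrossed-product-con-vectG} is phrased as \emph{producing} a matched pair from a crossed action; I must ensure that the matched pair of groups appearing in the crossed extension obtained in the first step is the \emph{same} matched pair that feeds the bicrossed product, so that the two correspondences are inverse to one another on the nose rather than merely up to some reshuffling. Once the bijectivity of the correspondence at $\cM=\vect$ is in hand, the argument closes formally, and the remaining work is checking that ``equivalence of exact factorizations'' implies ``tensor equivalence of the total categories,'' which is immediate from the definition recalled in the preliminaries.
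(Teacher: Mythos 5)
Your proposal is correct and follows essentially the same route as the paper's own proof: apply $\psi_{\vect_G,\vect}$ to $\cD$, identify the resulting extension with a crossed extension $\cC^{(G,\Gamma)}$ via \cite[Theorem 1.1]{Natale-exact-sequence-G}, invoke Theorem \ref{teo:correspondencia-exact-con-bicrossed-product-con-vectG} to match that crossed extension with the image of a bicrossed product $\vect_G\bowtie\cC$, and conclude by the injectivity of the correspondence at $\cM=\vect$. The only cosmetic difference is that you phrase the final matching through the forward map $\varphi_{\operatorname{Rep} G,\vect}$ while the paper stays on the $\psi$ side throughout, which is immaterial since the two maps are mutually inverse in this case.
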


Our strategy for proving these theorems is as follows:

\begin{enumerate}[leftmargin=*,label=(\roman*)]
    \item\label{item:strategy-i} Start with a crossed extension data for $\cC$ and construct the tensor category $\cC^{(G,\Gamma)}$ and a matched pair between $\cC$ and $\vect_G$ to define $\vect_G\bowtie \cC$. 
    \item Apply the Basak-Gelaki correspondence to $\mathcal{B}=\vect_G\bowtie \cC$ to obtain an extension 
    \begin{align}\label{eq:extension-basak-gelaki}
\operatorname{Rep}G\xhookrightarrow{} \cB^*_\cC \xrightarrow{} \cC.
    \end{align}
    \item\label{item:strategy-iii}  Check that the extension \eqref{eq:extension-basak-gelaki} is equivalent to the original crossed extension
    \begin{align}\label{eq:extension-crossed-sonia}
\operatorname{Rep} G\xhookrightarrow{} \cC^{(G,\Gamma)} \xrightarrow{} \cC.
    \end{align}
    \item\label{item:strategy-iv}  Since every exact factorization $\vect_G \bullet \cC$ corresponds in a one-to-one way to an extension $\operatorname{Rep} G\xhookrightarrow{} \cD\xrightarrow{} \cC$
    and furthermore, every such extension is equivalent to one like \eqref{eq:extension-crossed-sonia}, then the exact factorization is equivalent to a bicrossed product $\vect_G\bowtie \cC$ by Theorem \ref{teo:correspondencia-exact-con-bicrossed-product-con-vectG}. 
\end{enumerate} Items \ref{item:strategy-i}-\ref{item:strategy-iii} prove Theorem \ref{teo:correspondencia-exact-con-bicrossed-product-con-vectG} and item \ref{item:strategy-iv} proves Theorem \ref{teo:teorema-2}.

The next result shows that there exists a matched pair of fusion categories associated with a $(G, \Gamma)$-crossed action.

\begin{prop}\label{prop:dato-sonia-para-armar-matched-pair}
    Let $(G,\Gamma, \tridb, \fizb)$ be a matched pair of groups, and $\cC$ be a fusion category with a $(G,\Gamma)$-crossed action $\fiz$. Then there exists a matched pair of fusion categories $(\vect_G, \cC, G, \Gamma, \tridb, \fizb, \trid, \fiz)$, with left action $k\trid g = k\tridb g$, for all $k\in\Gamma$, $g\in\Irr(\vect_G) = G$. The rest of the data are identity maps.
\end{prop}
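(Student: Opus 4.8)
The plan is to build the $\vect_G$-side of the matched pair entirely from the group data $(G,\Gamma,\tridb,\fizb)$ and to import the $\cC$-side verbatim from the given $(G,\Gamma)$-crossed action. Concretely, I equip $\vect_G$ with its canonical faithful $G$-grading $\vect_G=\bigoplus_{g\in G}(\vect_G)_g$, where $(\vect_G)_g$ is spanned by the simple object $g$, and take on $\cC$ the faithful $\Gamma$-grading together with the categorical right $G$-action $\fiz$ supplied by the crossed action, which by hypothesis already satisfies $\cC_k\fiz g=\cC_{k\fizb g}$. For the left action of $\Gamma$ on $\vect_G$ I set $k\trid g:=k\tridb g$ on simple objects and extend $\ku$-linearly; since this merely permutes the simples according to the set-action $\tridb$, each $k\trid-$ is an exact $\ku$-linear autoequivalence, it sends $(\vect_G)_g$ to $(\vect_G)_{k\tridb g}$ as required, and the structure isomorphisms $\bL^2$ and $\bL^0$ are identities, since by associativity of $\tridb$ the two sides of each are the same simple object of $\vect_G$. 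The natural isomorphisms $\eta^g$ and the unit constraints $\eta_0^g$ on the $\cC$-side are exactly the data of the crossed action.

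The only $\vect_G$-side datum that is not purely formal is the family $\gamma^k$. Here I use the decisive observation that the matched-pair axiom $k\tridb(gh)=(k\tridb g)\,((k\fizb g)\tridb h)$ forces the source and target of
\[
\gamma^k_{g,h}\colon k\trid(g\ot h)\longrightarrow (k\trid g)\ot\bigl((k\fizb g)\trid h\bigr)
\]
to be \emph{literally the same} simple object of $\vect_G$: indeed $k\trid(g\ot h)=k\tridb(gh)$, while $(k\trid g)\ot((k\fizb g)\trid h)=(k\tridb g)((k\fizb g)\tridb h)$, and these two elements of $G$ coincide by the axiom. I can therefore declare every $\gamma^k_{g,h}$ and every $\gamma_0^k\colon k\trid\uno\to\uno$ (using also $k\tridb e=e$) to be the identity, extending by semisimplicity. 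This is the step where the group-level matched-pair conditions do the real work; it is the only place a nontrivial identity is invoked on the $\vect_G$ side.

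It then remains to verify the coherence diagrams of \cite[Definition 4.1]{mp-nuestro}, which I will split into three families. The diagrams involving only $\cC$, $\fiz$ and $\eta$ coincide with the defining diagrams of the $(G,\Gamma)$-crossed action in \cite[Definition 4.1]{Natale-crosssed-action} and hence hold by hypothesis. The diagrams involving only $\vect_G$, $\trid$ and $\gamma$ reduce, after unravelling, to identities between simple objects indexed by $G$: each such identity is either trivial, because the associator and unit constraints of $\vect_G$ are identities and $\gamma$, $\bL^2$, $\gamma_0$ are identities, or it is exactly the compatibility of $\tridb$ with products in $G$, obtained by applying $\gamma^k$ to a triple product $g\ot h\ot \ell$ and comparing bracketings. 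The remaining diagrams, which couple the two gradings and the two actions, likewise degenerate to the group matched-pair equations $(kt)\fizb g=(k\fizb(t\tridb g))(t\fizb g)$ and $k\tridb(gh)=(k\tridb g)((k\fizb g)\tridb h)$.

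I expect the main obstacle to be purely bookkeeping: matching the precise indexing and bracketing conventions of the matched-pair-of-fusion-categories diagrams in \cite{mp-nuestro} with those of the crossed action in \cite{Natale-crosssed-action}, and confirming that every $\vect_G$-side diagram genuinely collapses to a group identity rather than demanding a nontrivial morphism. Since all $\vect_G$-side morphisms are identities between equal simple objects and the $\cC$-side coherence is inherited directly from the crossed action, no new structural input is needed, and the verification is a finite, diagram-by-diagram check that I will carry out explicitly.
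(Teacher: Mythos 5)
Your proposal is correct and follows essentially the same route as the paper: take the crossed-action data verbatim on the $\cC$ side, put identity maps everywhere on the $\vect_G$ side (justified because the group matched-pair axioms make the relevant sources and targets literally equal simple objects), observe that the $\eta$-diagrams are exactly the defining diagrams of the $(G,\Gamma)$-crossed action, and note that all remaining diagrams commute trivially since every arrow in them is an identity. The paper's proof is just a terser version of this, citing the specific equations of \cite{mp-nuestro} that match Natale's crossed-action axioms.
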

\begin{proof}
The diagrams from \cite[Definition 4.1]{Natale-crosssed-action} are the same as \cite[Equations (11), (14), (15), (17), (19)]{mp-nuestro} and \cite[Lemma 4.3 - Equations (21),(22)]{mp-nuestro}. All other diagrams commute trivially because all the arrows involved are identities.
\end{proof}

Therefore, we have the bicrossed product fusion category $\cB=\vect_G\bowtie \cC$, by \cite[Corollary 4.9]{mp-nuestro}.
    
    \begin{lemma}\label{lemma:C-is-module-category-over-B}
        The category $\cC$ is a right $\cB$-module category with action $\bar{\ot}$ given by 
        \begin{align*}
            M\bar{\otimes} (g\bowtie C)=M\fiz g\otimes C,
        \end{align*} module associativity constraint given by the map 
        \begin{align*}
            m_{M, g\bowtie C, g'\bowtie C'} = & ((\eta^h_{M\fiz g, C})^{-1}\ot \id_{C'})((\mathbf{R}^2_{g,|C|\tridb g'})_M^{-1}\ot \id_{C\fiz g'}\ot\id_{C'})\\& \alpha^{-1}_{M\fiz g(|C|\tridb g'),C\fiz g', C'},
        \end{align*} and unit constraint given by $m^0_M=r_M(({\mathbf R}^0_M)^{-1}\ot \id_\uno)$, for all $C, C', M\in \cC$, $g, g'\in G$.
        \end{lemma}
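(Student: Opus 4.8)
The plan is to check directly that the triple $(\bar{\otimes}, m, m^0)$ satisfies the axioms of a right $\cB$-module category in the sense of \cite[Definition 7.1.1]{EGNO-book}. The structural requirements are immediate: every arrow appearing in $m_{M,g\bowtie C,g'\bowtie C'}$ and in $m^0_M$ is invertible, since $\eta^{g'}$, $(\mathbf{R}^2_{g,|C|\tridb g'})_M$, $\alpha$, $r$ and $\mathbf{R}^0$ are isomorphisms by hypothesis, so $m$ and $m^0$ are isomorphisms; and their naturality in $M$, $C$, $C'$ follows slot by slot from the naturality of the crossed-action constraints $\eta$, of the action associativity $\mathbf{R}^2$, and of the associativity $\alpha$ of $\cC$. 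As a sanity check on why such a structure must exist at all, note that as abelian categories $\cB=\vect_G\boxtimes\cC$, so $\operatorname{Ind}_{\vect_G}^{\cB}(\vect)=\vect\boxtimes_{\vect_G}\cB\simeq\cC$, which is exactly the induced $\cB$-module category the Basak--Gelaki construction will feed on; the content of the lemma is to make this induced structure explicit.

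The essential step is the pentagon axiom. Abbreviating $X=g\bowtie C$, $Y=g'\bowtie C'$, $Z=g''\bowtie C''$, I must verify
\[
m_{M\bar{\otimes}X,\,Y,\,Z}\circ m_{M,\,X,\,Y\otimes Z}=(m_{M,X,Y}\bar{\otimes}\,\id_Z)\circ m_{M,\,X\otimes Y,\,Z}\circ(\id_M\bar{\otimes}\,\alpha^{\cB}_{X,Y,Z}),
\]
where $\alpha^{\cB}$ is the associativity of the bicrossed product. By \cite[\S 4]{mp-nuestro}, $\alpha^{\cB}$ is itself assembled out of $\eta$, the action constraints $\mathbf{R}^2$, and the associativity $\alpha$ of $\cC$, because the left-action data $\gamma,\mathbf{L}^2$ are identities here ($\cA=\vect_G$, by Proposition \ref{prop:dato-sonia-para-armar-matched-pair}). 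I would therefore substitute both the explicit $m$'s and the explicit $\alpha^{\cB}$ into the identity above; after cancelation the equation becomes a single large diagram whose arrows are copies of $\eta$, $\mathbf{R}^2$ and $\alpha$ only. I would then factor this diagram into commuting cells, each of which is one of the coherences already at our disposal: the pentagon for $\alpha$ in $\cC$, the cocycle relation \eqref{action-eq1} for $\mathbf{R}^2$, and the coherence diagrams for the crossed action $\eta$ from \cite[Definition 4.1]{Natale-crosssed-action} (equivalently \cite[Equations (11),(14),(15),(17),(19)]{mp-nuestro}). These are precisely the diagrams invoked in Proposition \ref{prop:dato-sonia-para-armar-matched-pair}, so no new relation is required.

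The unit axiom is handled the same way but far more cheaply. The unit of $\cB$ is $e\bowtie\uno$, and $M\bar{\otimes}(e\bowtie\uno)=M\fiz e\otimes\uno$; substituting $m^0_M=r_M((\mathbf{R}^0_M)^{-1}\ot\id_\uno)$ together with the specializations $g'=e$, $C'=\uno$ into the triangle identity collapses it to the unit constraint \eqref{action-eq2} for $\mathbf{R}^0$, the normalization $\eta_0$ of the crossed action, and the triangle axiom of $\cC$.

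The main obstacle is the pentagon verification: because $\alpha^{\cB}$ is already an intricate composite, matching the two sides is a sizeable diagram chase, and the bookkeeping of degrees (the $\Gamma$-degree $|C|$ controlling the arguments of $\mathbf{R}^2$ and $\eta$, and the $G$-action on the gradings) must be tracked carefully. What makes it tractable is that, once everything is expanded, the identity to be proved is literally a rearrangement of the coherence data that already guarantees $\cB$ is a fusion category, so the chase terminates in the known axioms rather than producing a genuinely new constraint.
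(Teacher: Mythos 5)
The paper offers no proof of this lemma at all: it is stated bare and treated as a routine verification, so there is no ``paper proof'' to compare against, and your direct check of the module-category axioms is exactly the argument the authors are implicitly deferring to. Your outline is sound: the action $\bar\ot$, the constraint $m$ (where you correctly read the paper's $\eta^h$ as a typo for $\eta^{g'}$), and $m^0$ are built from the isomorphisms $\eta$, $\mathbf{R}^2$, $\mathbf{R}^0$, $\alpha$, $r$, and after substituting the explicit associativity $\alpha^{\cB}$ of the bicrossed product (which, since the $\vect_G$-side data $\gamma$, $\mathbf{L}^2$ are identities by Proposition \ref{prop:dato-sonia-para-armar-matched-pair}, is itself a composite of $\eta$, $\mathbf{R}^2$, $\alpha$ only), the module pentagon does factor into cells that are instances of the pentagon of $\cC$, the cocycle condition \eqref{action-eq1}, and the crossed-action diagrams of \cite[Definition 4.1]{Natale-crosssed-action}; the triangle similarly collapses onto \eqref{action-eq2} and the $\eta_0$ normalization. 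Two remarks. First, your ``sanity check'' is more than a sanity check: the identification $\cC\simeq\vect\boxtimes_{\vect_G}\cB=\operatorname{Ind}_{\vect_G}^{\cB}(\vect)$, with the right $\cB$-action descending from the regular action of $\cB$ on itself (note $(e\bowtie M)\ot(g\bowtie C)=(|M|\tridb g)\bowtie(M\fiz g\ot C)$, whose $\cC$-component is precisely $M\fiz g\ot C$), is the conceptual route to the lemma and is exactly the module category $\cN$ that the map $\psi_{\vect_G,\vect}$ consumes later in the paper; promoting it from motivation to proof would spare you most of the diagram chase. Second, two small cautions: your displayed pentagon places $\alpha^{\cB}_{X,Y,Z}$ on the side that only type-checks if $\alpha^{\cB}$ is oriented $X\ot(Y\ot Z)\to(X\ot Y)\ot Z$, opposite to the \cite[Definition 2.1.1]{EGNO-book} convention, so fix the orientation before chasing; and as written your argument is a plan --- the reduction of the pentagon to known coherences is asserted, not executed --- which is acceptable here only because the claim is genuinely mechanical and the paper itself treats it as such.
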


Since $\cC$ is equipped with a $(G,\Gamma)$-crossed action, we can consider the crossed extension $\cC^{(G,\Gamma)}$. The next result shows the relation between this construction and the bicrossed product $\cB=\vect_G\bowtie \cC$. 

\begin{theorem}\label{teo:eq-Natale}
        There is a tensor equivalence $\cB^*_{\cC}=\operatorname{End}_{\cB}(\cC)\simeq \cC^{(G,\Gamma)}$.
\end{theorem}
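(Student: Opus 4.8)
The plan is to construct an explicit tensor equivalence between the dual category $\cB^*_{\cC} = \operatorname{End}_{\cB}(\cC)$ and the crossed extension $\cC^{(G,\Gamma)}$, whose objects are the equivariant objects $(X, \{u_g\}_{g\in G})$ in $\cC^G$. The key observation is that by Lemma \ref{lemma:G-cat-G-functor}, the right $\cB$-module structure on $\cC$ from Lemma \ref{lemma:C-is-module-category-over-B} restricts along the inclusion $\vect_G \hookrightarrow \cB$ to give a $G$-action on $\cC$, and this $G$-action should coincide (via the identity-map data of Proposition \ref{prop:dato-sonia-para-armar-matched-pair}) with the crossed action $\fiz$ already fixed on $\cC$. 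Thus the equivariantization $\cC^G$ underlying $\cC^{(G,\Gamma)}$ is built from exactly the same $G$-action that governs the module structure on $\cC$ as a $\cB$-module category.

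First I would unwind what a $\cB$-module endofunctor $(\bF, s)\colon \cC \to \cC$ amounts to concretely. Since $\cB = \vect_G \bowtie \cC$ contains $\cC$ as a subcategory (via $C \mapsto e \bowtie C$) whose module action on $\cC$ is just the regular tensor product $M \bar{\ot}(e\bowtie C) = M \ot C$, a $\cB$-module endofunctor restricts to a $\cC$-module endofunctor of $\cC$ acting by right multiplication; by rigidity such functors are represented by left multiplication by a fixed object $X = \bF(\uno) \in \cC$, so $\bF \cong X \ot (-)$. The remaining module-structure isomorphisms $s$ encode compatibility with the $\vect_G$-part of the action, i.e. with the functors $-\fiz g$. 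The plan is to show that the data $s_{M, g\bowtie \uno}$ packaged over all $g \in G$ is precisely a family of isomorphisms $u_g\colon X \fiz g \to X$, and that the module-functor axiom (from \cite[Definition 7.2.1]{EGNO-book}) together with the explicit associativity constraint $m$ of Lemma \ref{lemma:C-is-module-category-over-B} translates exactly into the equivariance condition \eqref{eq:equivariant-object-definition}, namely $u_{gh}(\mathbf{R}^2_{g,h})_X = u_h(u_g \fiz h)$. This gives a bijection on objects $(\bF, s) \leftrightarrow (X, \{u_g\})$, and a parallel check on module natural transformations gives the bijection on morphisms, yielding an equivalence of abelian categories.

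Next I would verify that this equivalence is monoidal. The tensor product in $\cB^*_{\cC}$ is composition of module endofunctors; for $\bF \cong X \ot(-)$ and $\bF' \cong X' \ot(-)$ the composite is $(X \ot X')\ot(-)$, matching the tensor product in $\cC^{(G,\Gamma)}$ on underlying objects. The real content is matching the equivariant structures: the $u_g$ attached to the composite must equal the tensor product of equivariant structures as defined in the monoidal structure of $\cC^{(G,\Gamma)}$ from \cite[Theorem 5.1]{Natale-crosssed-action}, and this is exactly where the natural isomorphisms $\eta^g$ of the crossed action enter. The plan is to trace how $\eta^g_{X, X'}$ appears both in the composition of the $s$-constraints (through the $\eta$-dependent term in the formula for $m$ in Lemma \ref{lemma:C-is-module-category-over-B}) and in Natale's monoidal structure, and confirm they agree, so that the functor is tensor with trivial (or canonical) tensor constraint $\bF^2$.

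The hard part will be the bookkeeping in this last monoidal-compatibility step: one must carefully compose the module-associativity constraints $m_{M, g\bowtie C, g'\bowtie C'}$ — which involve a cocktail of $\eta^h$, $\mathbf{R}^2_{g,h}$, and the ambient associator $\alpha$ — and show the resulting cocycle-type identity on the $u_g$'s is literally the one defining multiplication of equivariant objects in $\cC^{(G,\Gamma)}$. The two potential pitfalls are keeping the grading-dependent arguments $|C| \tridb g$ straight (so that the $\Gamma$-grading on $\cC$ interacts correctly with the $G$-action, as encoded in the matched-pair relations) and ensuring the identifications are natural, not merely objectwise; I would handle the latter by exhibiting $s$ and $u_g$ as components of genuine natural transformations and checking naturality squares commute, reducing the remaining verification to the single coherence diagram from Natale's definition of the crossed action, which by Proposition \ref{prop:dato-sonia-para-armar-matched-pair} coincides with the matched-pair axioms already assumed.
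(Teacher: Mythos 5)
Your proposal is correct and follows essentially the same route as the paper's proof: identify $(\bF,s)\in\cB^*_{\cC}$ with the pair $(\bF(\uno),\{u_g\}_{g\in G})$ extracted from $s_{-,\,g\bowtie\uno}$, observe that the module-functor axioms translate into the equivariance condition \eqref{eq:equivariant-object-definition}, and equip the inverse assignment $(U,\{u_g\}_{g\in G})\mapsto U\ot(-)$ with the associator $\alpha^{-1}$ as tensor constraint, exactly as the paper does with its functors $\mathbf{T}$ and $\mathbf{S}$. The only caution is that the $G$-action induced on $\cC$ by the $\cB$-module structure is $M\mapsto M\fiz g\ot\uno$, which agrees with $\fiz$ only up to unit constraints rather than literally coinciding as you assert; the paper devotes explicit care to this point (via the $G$-functor $(\mathbf{Id}, r_{-\fiz g})$ and Lemma \ref{lemma:G-funtor-preserva-obj-equivariant}), but this is routine bookkeeping that does not affect the viability of your plan.
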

\begin{proof}
The abelian part of the equivalence is similar to the usual equivalence between the dual of the equivariantization and the semidirect product with a group (see for example \cite[Example 3.17]{Nik_morita}) but, in this situation, a bit more detail is necessary since the $G$-action induced in $\cC$ is different (in principle) from the action $\fiz$ that $\cC$ comes equipped with. Because of this, we provide the full details of this equivalence. First, we define the functors $ \mathbf{T}  :  \cB^*_{\cC} \to  \cC^{(G,\Gamma)}$ and $\mathbf{S}:  \cC^{(G,\Gamma)} \to  \cB^*_{\cC}$, and check that they are well defined and inverse to each other. Then we show that $\mathbf{S}$ has a tensor structure, and this concludes the proof. 

The functor $ \mathbf{T}  :  \cB^*_{\cC} \to  \cC^{(G,\Gamma)}$ is defined as $\mathbf{T}(\mathbf F,s)=(\mathbf{F}(\uno), \{u_g\}_{g\in G})$, where $u_g: \mathbf{F}(\uno)\fiz g\to \mathbf{F}(\uno)$ is given by $u_g=\mathbf{F}(r_\uno)\mathbf{F}(\eta_0^ g\ot\id_\uno)s^{-1}_{\uno, g\bowtie \uno}r_{\mathbf{F}(\uno)\fiz g}^{-1}$, and given a natural transformation $\epsilon: (\mathbf F,s)\to (\mathbf G, t)$,  $\mathbf{T}(\epsilon)=\epsilon_\uno$, where $\epsilon_1: (\mathbf F (\uno), \{u_g\}_{g\in G})\to (\mathbf G (\uno), \{v_g\}_{g\in G})$. The functor $\mathbf{S}:  \cC^{(G,\Gamma)} \to  \cB^*_{\cC}$ is defined by $\mathbf{S} (U,  \{u_g\}_{g\in G})= ( \mathbf{S}_U, s)$, where $\mathbf{S}_U: \cC\to \cC$ is given by $\mathbf{S}_U(C)= U\ot C$ and, for all $M, C\in \cC$, $g\in G$, $s_{M, g\bowtie C}:  \mathbf{S}_U(M\bar{\ot}g\bowtie C)\to  \mathbf{S}_U(M)\bar{\ot}(g\bowtie C)$ is given by 
\begin{eqnarray}\label{equation:def-s-F}
    s_{M, g\bowtie C}& =({\eta_{U, M}^ g}^{-1}\ot\id_C)(u^{-1}_{|M|\tridb g}\ot\id_{M\fiz g}\ot \id_C)\alpha^{-1}_{U, M\fiz g, C},
\end{eqnarray} and given a morphism $f: (U,  \{u_g\}_{g\in G})\to (U',  \{u'_g\}_{g\in G})$,  $\mathbf{S}(f)=\epsilon$, where $\epsilon : ( \mathbf{S}_U, s)\to ( \mathbf{S}_{U'}, s')$ is given by $\epsilon_C=f\otimes \id_C$.

    Now, we check that $\mathbf{T}$ is well-defined, that is, $(\mathbf{F}(\uno), \{u_g\}_{g\in G})$ is an equivariant object. Since the category $\cC$ is a $\cB$-module category by Lemma \ref{lemma:C-is-module-category-over-B}, then it is a $\vect_G$-module category. By Lemma \ref{lemma:G-cat-G-functor}, $\cC$ is a $G$-category with induced action $M\overline{\fiz} g = M\fiz g \ot \uno$, and $\mathbf{F}$ is a $G$-functor. The $G$-categories $(\cC,\fiz)$ and $(\cC,\overline{\fiz})$ are equivalent through the $G$-functor $(\mathbf{Id}, r_{-\fiz g})$. By \cite[Equation (17)]{mp-nuestro}, the pair $(\uno,\eta_0^g)$ is an equivariant object in $(\cC,\fiz)$. Applying Lemma \ref{lemma:G-funtor-preserva-obj-equivariant} to this object with the $G$-functor $(\mathbf{Id}, r_{-\fiz g})$, we get that $(\uno, \eta^g_0 r_{\uno\fiz g}) = (\uno, r_{\uno} (\eta^g_0 \ot \id_{\uno}))$ is an equivariant object in $(\cC,\overline{\fiz})$. Applying again Lemma \ref{lemma:G-funtor-preserva-obj-equivariant} to the $G$-functor $(\mathbf{F},s)$, we get that $(\mathbf{F}(\uno), \mathbf{F}(r_\uno)\mathbf{F}(\eta_0^ g\ot\id_\uno)s^{-1}_{\uno, g\bowtie \uno})$ is an equivariant object in $(\cC,\overline{\fiz})$. Finally, again by Lemma \ref{lemma:G-funtor-preserva-obj-equivariant} now applying to $(\mathbf{Id}, r_{-\fiz g}^{-1})$ gives the desired equivariant object. 
    
    It is straightforward to see that $\epsilon_\uno$ is a morphism of equivariant objects, that is, $\epsilon_\uno u_g=v_g(\epsilon_\uno \fiz g)$, for all $g\in G$, by using the naturality of $r$ and $\epsilon$, and that $\epsilon: \mathbf F\to \mathbf G$ is a morphism of $\mathcal B$-module functors.

    Next, we prove that $\mathbf{S}$ is well-defined. To see that $( \mathbf{S}_U, s)$ is a $\cB$-module functor, we use the naturality of $\alpha_{U, -, \uno}$,  \cite[Lemma 4.3 - Equation (20)]{mp-nuestro}, \cite[Property (3.1.7)]{EGNO-book}, and that $(U, \{u_g\}_{g\in G})$ is a $G$-equivariant object. It follows from $\alpha$ and $\eta^g$ being natural transformations and $f: (U,  \{u_g\}_{g\in G})\to (U',  \{u'_g\}_{g\in G})$ being  a morphism of equivariant objects that the natural transformation $\epsilon$ is a morphism of $\cC$-module functor.

 Now, we show that $\mathbf{S}\circ \mathbf{T}\simeq \textrm{Id}_{\cB^*_{\cC}}$. We have 
 \begin{align*}
     (\mathbf{S}\circ \mathbf{T})(\mathbf{G}, s) =\mathbf{S}(\mathbf{G}(\uno), \{u_g\}_{g\in G})=(\mathbf{S}_{\mathbf{G}(\uno)}, t),  \,\,\text{where}
 \end{align*}  
\begin{align*}
   t_{M, g\bowtie C}= &({\eta^g_{\mathbf{G}(\uno), M}}^{-1}\ot \id_C)(r_{\mathbf{G}(\uno)\fiz (|M|\tridb g)}\ot\id_{M\fiz g}\ot \id_C)\\
   &(s_{\uno, (|M|\tridb g)\bowtie \uno}\ot\id_{M\fiz g}\ot \id_C ) (\mathbf{G}({\eta^{|M|\trid g}_0}^{-1}\ot\id_\uno)\ot\id_{M\fiz g}\ot \id_C)\\
   &(\mathbf{G}((r_\uno)^{-1})\ot\id_{M\fiz g}\ot \id_C)\alpha^{-1}_{\mathbf{G}(\uno), M\fiz g, C}.
\end{align*}

Then, we define the natural isomorphism $\nu: ( \mathbf{G},s)\to ( \mathbf{S}_{\mathbf{G}(\uno)},t)$ by 
\begin{align*}
\nu_C =&((\mathbf{R}^0_{G(\uno)})^{-1}\ot\id_C)s_{\uno, e\bowtie C}\mathbf{G}(\mathbf{R}^0_\uno\ot\id_C)\mathbf{G}(\ell_C^{-1}).
    \end{align*}

The condition that $\nu$ is a morphism of $\cC$-module functors, that is, $(\nu_M\bar{\ot}\id_{g\bowtie C})$ $s_{M, g\bowtie C}=t_{M,g\bowtie C}\nu_{M\bar{\ot}g\bowtie C}$, follows from the naturality of $s_{-,g\bowtie C}$, the naturality of $\eta^g_{-,M}$,  \cite[Lemma 4.3 - Equation (21)]{mp-nuestro},  \cite[Equation (14)]{mp-nuestro}, \cite[Equation (17)]{mp-nuestro}, the naturality of $s_{-, e\bowtie M\fiz g\ot C}$,  the naturality of $\mathbf{R}^ 0$, Equation \eqref{action-eq2},  $(\mathbf{G}, s)$ being a $\cC$-module functor, 
 \cite[Equation (15)]{mp-nuestro}, \cite[Lemma 4.3 - Equation (21)]{mp-nuestro}, the definition of $m$,  the structure of module functor of $G$ given by $s$, and  the naturality of $s_{\uno,-}$.

 We check now that $\mathbf{T}\circ \mathbf{S}\simeq \textrm{Id}_{\cC^{(G, \Gamma)}}$. We have that 
\begin{eqnarray*}
    (\mathbf{T}\circ \mathbf{S})(U, \{u_g\}_{g\in G})&=(\mathbf{S}_U, s)=(\mathbf{S}_U(\uno), \{u'_g\}_{g\in G}), \,\,\text{where}
    \end{eqnarray*}
    \begin{eqnarray*}
    u'_g & = (\id_U\ot m^0_\uno)(\id_V\ot\eta^g_0\ot\id_\uno)\alpha_{V, \uno\fiz g,\uno}(u_g\ot\id_{\uno\fiz g}\ot\id_\uno) (\eta^g_{U,\uno}\ot\id_\uno)r^{-1}_{(U\ot \uno)\fiz g}.
\end{eqnarray*}
In $\mathcal C$, we have the natural isomorphism $r$ between the two underlying objects. To see that the natural isomorphism $r_U^{-1}: (U,\{u_g\}_{g\in G})\to (\mathbf{S}_U(\uno), \{u'_g\}_{g\in G})$ is a map in $\cC^{(G, \Gamma)}$, we use the naturality of $r$,  \cite[Equation (15)]{mp-nuestro}, and  \cite[(2.13)]{EGNO-book}.

 The functor $\mathbf{S}$ is tensor with $J_{(U, \{u_g\}),(U', \{u'_g\})}: \mathbf{S}(U)\ot \mathbf{S}(U')\to \mathbf{S}(U\ot U')$ given by $J_{(U, \{u_g\}),(U', \{u'_g\})}(C):=\alpha^{-1}_{U, U', C}: U\ot (U'\ot C)\to  (U\ot U')\ot C$. In fact, the associativity in $\mathcal{B}^*_{\cC}$ is the identity and the associativity of $\cC^{(G,\Gamma)}$ is the associativity of $\cC$, then the diagram of the tensor structure of $(\mathbf{S},J)$ is the pentagon of the associativity of $\cC$.

\end{proof}

\begin{lemma}\label{lemma:equivalencia-sucesiones}
    There is an equivalence of extensions between
    \begin{align*}
     \operatorname{Rep}G\xhookrightarrow{\mathbf{\iota_1}} \cC^{(G,\Gamma)} \xrightarrow{\mathbf{F}_1} \cC &&  \text{ and} &&\operatorname{Rep}G\xhookrightarrow{\mathbf{\iota}_2} \cB^*_\cC \xrightarrow{\mathbf{F}_2}  \cC.
\end{align*} 
\end{lemma}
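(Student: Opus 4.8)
The plan is to take the tensor equivalence $\mathbf{S}\colon \cC^{(G,\Gamma)}\to \cB^*_\cC$ from Theorem \ref{teo:eq-Natale} as the middle vertical functor of the equivalence of extensions, together with $\id_\cC$ on the quotient and $\id_\vect$ on the module category $\cM_1=\cM_2=\vect$ (so that $\operatorname{End}(\id_\vect)$ is the identity). Since $\cA_i''\boxtimes\operatorname{End}(\cM_i)^{op}=\cC\boxtimes\vect\simeq\cC$, unwinding the definition of equivalence of extensions reduces the statement to checking that the two squares
\[
\begin{tikzcd}
\operatorname{Rep}G \ar[r,"\iota_1"]\ar[d,"\mathbf{T'}"] & \cC^{(G,\Gamma)}\ar[r,"\mathbf{F}_1"]\ar[d,"\mathbf{S}"] & \cC\ar[d,"\id"]\\
\operatorname{Rep}G\ar[r,"\iota_2"] & \cB^*_\cC\ar[r,"\mathbf{F}_2"] & \cC
\end{tikzcd}
\]
commute up to monoidal natural isomorphism for a suitable tensor autoequivalence $\mathbf{T'}$ of $\operatorname{Rep}G$, together with the analogous square for the module structure functors $\rho_i\colon\operatorname{Rep}G\to\operatorname{End}(\vect)^{op}$.

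First I would record the concrete form of the Basak--Gelaki functors $\iota_2,\mathbf{F}_2$. Under the identifications $(\vect_G)^*_\vect\simeq\operatorname{Rep}G$, the module equivalence $\cN=\operatorname{Ind}_{\vect_G}^{\cB}(\vect)\simeq\cC$ provided by Lemma \ref{lemma:C-is-module-category-over-B}, and $\cC^*_\cC\simeq\cC$ (with a $\cC$-module endofunctor $\Phi$ identified with $\Phi(\uno)$), the projection $\mathbf{F}_2=\kappa^*$ sends a $\cB$-module functor to its restriction along $\kappa\colon\cC\hookrightarrow\cB$ evaluated at $\uno$, whereas $\mathbf{F}_1$ is the forgetful functor $(X,\{u_g\}_{g\in G})\mapsto X$.

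The main step is the right-hand square. For $(U,\{u_g\}_{g\in G})\in\cC^{(G,\Gamma)}$ we have $\mathbf{S}(U,\{u_g\}_{g\in G})=(\mathbf{S}_U,s)$ with $\mathbf{S}_U=U\ot-$, so $\mathbf{F}_2\mathbf{S}(U,\{u_g\}_{g\in G})=\mathbf{S}_U(\uno)=U\ot\uno$ while $\mathbf{F}_1(U,\{u_g\}_{g\in G})=U$. The candidate natural isomorphism $\mathbf{F}_2\mathbf{S}\Rightarrow\mathbf{F}_1$ is therefore the right unitor $r_U\colon U\ot\uno\to U$, and I would verify that it is monoidal using that the tensor structure of $\mathbf{S}$ is $J=\alpha^{-1}$, that the equivalence $\cC^*_\cC\simeq\cC$ is monoidal, and the triangle axiom relating $r$, $\ell$ and $\alpha$. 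I expect this to be the most delicate point, since it requires simultaneously unwinding the tensor constraint coming from the Basak--Gelaki identification $\cC^*_\cC\simeq\cC$ and the constraint $J$ supplied by Theorem \ref{teo:eq-Natale}.

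Finally, the left square and the $\rho$-square follow once the right square is established. Because $\mathbf{F}_2\mathbf{S}\simeq\mathbf{F}_1$, the equivalence $\mathbf{S}$ carries $\ker\mathbf{F}_1=\operatorname{Rep}G$ into $\ker\mathbf{F}_2=\operatorname{Rep}G$; as $\iota_1,\iota_2$ are exactly the inclusions of these kernels, $\mathbf{S}$ corestricts to a tensor autoequivalence $\mathbf{T'}$ of $\operatorname{Rep}G$ with $\mathbf{S}\iota_1\simeq\iota_2\mathbf{T'}$ holding by construction. It then remains to check that $\mathbf{T'}$ commutes with the fiber functors $\rho_1\simeq\rho_2$, which I would do by tracing an object of $\operatorname{Rep}G$ — sent by $\iota_1$ to an equivariant object supported on $\uno$ — through $\mathbf{S}$ and the evaluation at $\uno$, checking that $\mathbf{T'}$ preserves underlying multiplicity spaces and hence commutes with $\rho_1\simeq\rho_2$. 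This makes the $\rho$-square commute and completes the verification of the equivalence of extensions.
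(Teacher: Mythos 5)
Your proposal is correct and takes essentially the same route as the paper: the middle equivalence is the functor $\mathbf{S}$ of Theorem \ref{teo:eq-Natale} with the identity on $\cC$, and the key step is the right-hand square, handled exactly as in the paper by the right unitor $r\colon \mathbf{F}_2\circ\mathbf{S}\Rightarrow \mathbf{F}_1$ whose monoidality follows from the triangle axiom, the constraint $J=\alpha^{-1}$, and the equivariance of the objects involved. The only cosmetic difference is the left square: the paper defines $\iota_2$ by the same explicit formula as $\mathbf{S}\circ\iota_1$, so that $\mathbf{T}'=\id_{\operatorname{Rep}G}$ and the $\rho$-square commutes trivially, whereas you obtain $\mathbf{T}'$ by abstractly corestricting $\mathbf{S}$ to the kernels and defer a fiber-functor compatibility check which, when carried out, amounts to the same explicit comparison of $\mathbf{S}\circ\iota_1$ with $\iota_2$ that the paper performs up front.
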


\begin{proof}
   We first describe the functors that appear in the statement. Let us fix the tensor functor, 
   \begin{align*}
       \operatorname{can}_{\cC}\colon \vect \to \cC, \operatorname{can}_{\cC}(V)= \oplus_{i=1}^{\dim V} \uno,
   \end{align*} and then we have that
\begin{align*}
    \iota_1(V,\{\rho(g)\}_{g\in G}) &= (\operatorname{can}_{\cC}(V),\{\operatorname{can}_{\cC}(\rho(g)) (\oplus_{i=1}^{\dim V}\eta_0^g)\}_{g\in G} ).
\end{align*}

The functor $\mathbf{F}_1: \cC^{(G,\Gamma)}\to \cC$ is the forgetful fuctor. 

We define
\begin{align*}
    \iota_2(V,\rho) & =(F_V, s),
\end{align*}
with the functor $F_V(C) = \operatorname{can}_{\cC}(V) \otimes C$ and $s_{M, g\bowtie C}$ is given by $({\eta_{\operatorname{can}_{\cC}(V), M}^ g}^{-1}\ot\id_C)((\operatorname{can}_{\cC}(\rho(g)^{-1})(\oplus_{i=1}^{\dim V}{\eta_0^{|M|\tridb g}}^{-1})\ot\id_{M\fiz g}\ot \id_C)\alpha^{-1}_{\operatorname{can}_{\cC}(V), M\fiz g, C}$,  for all $M, C\in\cC, g\in G$. 
   
   The functor $\mathbf{F}_2$ is given by $\mathbf{F}_2=\mathbf{H}\circ \mathbf{I}\circ \mathbf{G}$, where each functor is defined as follows. Let $\cC_{\widetilde{\ot}}$ be the $\cC$-module category $\cC$ with the action restricted from the action of $\cB$ on $\cC$, more precisely, $M\widetilde{\ot} X=M\fiz e\ot X$ and $\widetilde{m}_{M, X, Y}$ is given by $m_{M, e\bowtie X, e\bowtie Y}(\id_{M\fiz e}\ot (\mathbf{R}^0_{X})^{-1}\ot\id_Y)$, for all $M, X\in \cC$. The tensor functor $\mathbf{G}:\cB^*_\cC\to \cC^*_{{\cC}_{\widetilde{\otimes}}}$ is given by $\mathbf{G}(\mathbf{F}, s)=(\mathbf{F},\overline{s})$, with $\overline{s}_{M,X}=s_{M, e\bowtie C}$, for all $M, X\in\cC$, and the monoidal structure of $\mathbf{G}$  is the identity.
     Notice that $\cC_{\widetilde{\ot}}\simeq \cC$ as $\cC$-module categories, where $\cC$ is viewed as a $\cC$-module category with the regular action. Therefore, there is a tensor equivalence  $\mathbf{I}: \cC^*_{{\cC}_{\widetilde{\otimes}}} \to \cC^*_{\cC}$ explicitly given as  $\mathbf{I}(\mathbf{F}, s)=(\mathbf{F}, s')$, $s'_{M, X}=((\mathbf{R}^0)^{-1}\ot \id_X)s_{M,X}\mathbf{F}(\mathbf{R}^0_M\ot\id_X)$, for all $M, X\in \cC$, and the monoidal structure of $\mathbf{I}$ is the identity.
   The tensor functor $\mathbf{H}: \cC^*_{\cC}\to \cC$ is the canonical functor given by $\mathbf{H}(\mathbf{F}, t)=\mathbf{F}(\uno)$ with monoidal structure $\mathbf{H}^2_{(\mathbf{F}, t), (\mathbf{F}', t')}=\mathbf{F}(\ell_{\mathbf{F}'(\uno)})t^{-1}_{\uno, \mathbf{F}'(\uno)}$.

The good definition of $\iota_2$ is the same as the good definition of the functor $\textbf{S}$, defined in Theorem \ref{teo:eq-Natale}, compared to Equation \ref{equation:def-s-F}. It is clear that the left square of the following diagrams commutes 
\begin{center}
    \begin{tikzcd}
    \operatorname{Rep} G\ar[d, equal]\ar[r, hook, "{\mathbf{\iota}_1}"]  & \cC^{(G, \Gamma)} \ar[r, "{\mathbf{F}_1}"]\ar[d, "\mathbf{S}"] & \cC\ar[d, equal] \\
    \operatorname{Rep} G \ar[r, hook, "{\mathbf{\iota}_2}"] & \cB^*_{\cC}\ar[r, "{\mathbf{F}_2}"] & \cC.
\end{tikzcd}
\end{center}

Let us see that the right square commutes up to a monoidal natural isomorphism. Since $(\mathbf{F}_2\circ \mathbf{S})(X, \{u_g\}_{g\in G})=X\otimes \uno$ and $\mathbf{F}_1(X, \{u_g\}_{g\in G})=X$, we have the natural isomorphism $r: \mathbf{F}_2\circ\mathbf{S}\to \mathbf{F}_1$. The monoidal structure of the functor $\mathbf{F}_2\circ \mathbf{S}$ is given by $(\mathbf{F}_2\circ \mathbf{S})^2_{(X, \{u_g\}_{g\in G}),(Y, \{v_g\}_{g\in G})}=\alpha^{-1}_{X, Y, \uno}(\id_X\otimes \ell_{Y\otimes\uno})\overline{s}^{-1}_{\uno, Y\otimes\uno}$, where $s$ is the module structure of the functor $\mathbf{F}_X$ defined in \eqref{equation:def-s-F}. To see that $r$ is monoidal, we use the triangle equation, naturality of $\alpha$, naturality of $\ell$,  \cite[Property (2.13)]{EGNO-book},  \cite[Equation (22)]{mp-nuestro}, and $(X,\{u_g\}_{g\in G})$ being an equivariant object.
\end{proof}

\subsection*{Proof of Theorem \ref{teo:correspondencia-exact-con-bicrossed-product-con-vectG}}

\begin{proofw}
Given the matched pair given in Proposition \ref{prop:dato-sonia-para-armar-matched-pair}, then we have a bicrossed product of fusion categories $\cB = \vect_G \bowtie \cC$; see \cite[Theorem 4.6]{mp-nuestro}. By \cite[Prop. 4.7]{mp-nuestro}, we have canonical inclusion functors $\vect_G\xhookrightarrow{\nu} \cB \xhookleftarrow{\kappa} \cC$. Now, as described in Subsection \ref{subsubsect:mapa-psi}, we apply $\psi_{\vect_G,\vect}$ to the exact factorization $\cB = \vect_G \bowtie \cC$ to get an exact sequence 
\begin{align*}\label{eq:exact-sequence-proof-nuestro}
    \operatorname{Rep} G\simeq {\vect_G}^*_{\vect}\xhookrightarrow{\iota_{{\vect_G}^*_{\vect}}} \cB^{*}_{\cC} \xrightarrow{\kappa^*} \cC^*_\cC\simeq\cC.
\end{align*}
It is straightforward to see that this sequence is equivalent to the sequence
  \begin{align*}
 \operatorname{Rep}G\xhookrightarrow{\mathbf{\iota}_2} \cB^*_\cC \xrightarrow{\mathbf{F}_2}  \cC,
\end{align*} 
defined in Lemma \ref{lemma:equivalencia-sucesiones}. By the same lemma, this sequence is equivalent to the sequence  \begin{align*}
     \operatorname{Rep}G\xhookrightarrow{\mathbf{\iota_1}} \cC^{(G,\Gamma)} \xrightarrow{\mathbf{F}_1} \cC.
\end{align*}
\end{proofw}

\subsection*{Proof of Theorem \ref{teo:teorema-2}} 
\begin{proofw}
Let $\vect_G\xhookrightarrow{}\mathcal{D}\xhookleftarrow{}\cC$ be an exact factorization of fusion categories. Then, by \cite[Theorem 5.1]{generalization-G-nonsemisimple}, there exists an exact sequence of fusion categories 
\begin{equation}\label{eq:exact-sequence-proof-5-2}
    \operatorname{Rep} G\xhookrightarrow{} \mathcal{D}^*_{\mathcal{\cC}} \xrightarrow{} \cC
\end{equation}
associated to $\cD$. It follows from \cite[Theorem 1.1]{Natale-exact-sequence-G} that Sequence \eqref{eq:exact-sequence-proof-5-2} is equivalent to 
\begin{align}\label{eq:exact-sequence-proof-5-2-sonia}
    \operatorname{Rep} G\xhookrightarrow{\iota_1} \cC^{(G,\Gamma)} \xrightarrow{\mathbf{F}_1} \cC,
\end{align}
where $\cC^{(G,\Gamma)}$ is a crossed extension, and the functors $\iota_1$ and $\mathbf{F}_1$ are described in the proof of Lemma \ref{lemma:equivalencia-sucesiones}. By Theorem \ref{teo:correspondencia-exact-con-bicrossed-product-con-vectG}, Sequence \eqref{eq:exact-sequence-proof-5-2-sonia} is equivalent to the image of $\psi_{\vect_G,\vect}$ on to a bicrossed product $\vect_G\bowtie \cC$. By \cite[Corollary 5.6]{EG-exact-sequence}, the map $\psi_{\vect_G,\vect}$ is one-to-one, hence $\cD$ is equivalent to $\vect_G\bowtie \cC$.
\end{proofw}

\begin{remark}
In \cite[\S5.1]{mp-nuestro}, we characterize the bicrossed products between a Tambara-Yamagami fusion category and a pointed fusion category. Then the previous proposition shows that these data characterize all exact factorizations between $\vect_G$ and a Tambara-Yamagami fusion category.
\end{remark}

\bibliographystyle{alpha}
\bibliography{conection-natale-biblio}

\end{document}